\newcommand\bk{{\mathbb K}}
\newcommand\bp{{\mathbb P}}
\newcommand\bfff{{\mathbb F}}
\newcommand\bc{{\mathbb C}}
\newcommand\bq{{\mathbb Q}}
\newcommand\bz{{\mathbb Z}}
\newtheorem{thm}{Theorem}
\newtheorem{conj}{Conjecture}
\newtheorem{prop}[thm]{Proposición}
\newtheorem{cor}[thm]{Corolary}
\newtheorem{lema}[thm]{Lemma}
\theoremstyle{remark}
\newtheorem{obs}[thm]{Remark}
\theoremstyle{definition}
\newtheorem{dfn}[thm]{Definition}
\newtheorem{ntc}[thm]{Notation}
\newtheorem{ejm}[thm]{Example}
\newtheorem{alg}{Algorithm}
\numberwithin{equation}{section}
\numberwithin{figure}{section}
\title{A polynomial generalization of the Euler characteristic for algebraic sets.}
\author{Miguel A. Marco-Buzun\'ariz}
\address{Departamento de Matem\'aticas, Universidad de Zaragoza,
C/ Pedro Cerbuna 12, 50009 Zaragoza, Spain.}
\email{mmarco@unizar.es}
\thanks{Partially supported by the ERC Starting Grant project TGASS, by Spanish Contract MTM2010-21740-C02-01, and by research group ``Geometría'' E15 of the government of Aragón.}
\begin{document}
 \begin{abstract}
 We present a method to compute the Euler characteristic of an algebraic subset of $\bc^n$. This method relies on clasical tools such as Gröbner basis and primary decomposition. The existence of this method allows us to define a new invariant for such varieties. This invariant is related to the poblem of counting rational points over finite fields.
\end{abstract}

\maketitle
\section{Introduction}
One of the main invariants of a topological space is its Euler characteristic. It was initially defined for cell complexes, but several extensions have been defined to more general classes of spaces. In the setting of complex algebraic varieties, the natural extension is the Euler characteristic with compact support. In~\cite{szafraniec-complex} Szafraniec gives a method to compute the Euler characteristic of a complex algebraic set by using methods from the real geometry. In this paper, we present another method, that only makes use of the basic properties of the Euler characteristic, and classical results on algebraic sets. This way of computing the Euler characteristic gives naturally a stronger invariant, which we define.

The method works as follows. Consider $V\subseteq \bc^n$ an irreducible algebraic set of dimension $d$ and degree $g$. Take a generic linear projection $\pi:\bc^n\to \bc^d$. If we consider $\pi$ restricted to $V$, it is a $g:1$ branched cover. The branchig locus $\Delta$ and its preimage $\pi\mid_V^{-1}(\Delta)$ can be computed. From the aditivity and the multiplicativity for covers of the Euler characeristic, we have the following formula:
\[
 \chi(V)=g\cdot \chi(\bc^d)-g\cdot \chi(\Delta)+\chi(\pi\mid_V^{-1}(\Delta)).
\]
So the computation of $\chi(V)$ is reduced to the computation of the Euler characteristic of algebraic sets of lower dimension, allowing us to use a recursion process.

In the previous method, we make use of the fact that $\chi(\bc^d)=1$. If instead of making this substitution, we keep track of $\chi(\bc^d)$ as a formal symbol, we obtain a stronger invariant $F(V)$. This invariant is defined as a polynomial in $\bz[L]$, and has some interesting properties: the dimension, degree, and Euler characteristic of an algebraic set can be computed from this polynomial. It also gives information on the number of points on some varieties over finite fields. This relation with finite varieties could be used to compute this invariant by counting points.

In Sections~\ref{sec-justificacion} and \ref{sec-descripcion} we show the preliminary results that prove the correctness of the method to compute the Euler characteristic, and describe the algorithm. Section~\ref{sec-invariante} is devoted to the generalization of this method to the new invariant, which is also defined and some of its properties are shown. The extension of this invariant to projective varieties 
is discused in Section~\ref{sec-proyectivo}. In Sections~\ref{sec-ejemplos} and \ref{sec-codigo} we include an implementation of the two algorithms in Sage, together with some examples and timings. As an important example, we show that in the case of hyperplane arrangements this invariant coincides with the characteristic polynomial. Finally, the relationship of the invariant with the number of points over finite fields is shown in Section~\ref{sec-cuerposfinitos}.

\section{Theoretical justification}
\label{sec-justificacion}
Let $V=V(I)\subseteq \bc^n$ be the algebraic set determined by a radical ideal $I$. Without loss of generality, we can assume that it is in general position (in the sense that we will precise later). By computing the associated primes of $I$ we obtain the decomposition in irreducible components $V=V_1 \cup \cdots \cup V_c$. The Euler characteristic $\chi(V)$ can be expressed as $\chi(V_1)-\chi((V_1)\cap(V_2\cup\cdots\cup V_c))+\chi(V_2\cup\cdots\cup V_c)$. The variety $(V_1)\cap(V_2\cup\cdots\cup V_c)$ is an algebraic set of lower dimension. So, by a double induction argument (over the dimension and over the number of irreducible components), we may reduce the problem of computing $\chi(V)$ to the case where $V$ is either zero-dimensional or irreducible. 

If $V$ is zero-dimensional, it consists on a number of isolated points, and its Euler characteristic equals the number of points. This number of points can be computed as the degree of the homogenization of the radical of $I$ (which can be computed via the Hilbert polynomial, see~\cite[Chapter 5]{singular-book} for example).

For the case of an irreducible variety $V=V(I)\subseteq\bc^n$ being $I\trianglelefteq\bc[x_1,\ldots,x_n]$ a radical ideal of Krull dimension $d$, we will distinguish the homogeneous case from the non homogeneous.

If $I$ is a homogenous ideal, the variety $V$ has a conic structure (it is formed by a union of lines that go through the origin). It means that $V$ is contractible and hence its Euler characteristic is $1$.

For the non homogeneous case, consider the projection
\[
\begin{array}{rcl}
 \pi:\bc^n & \to & \bc^d \\
(x_1,\ldots,x_n) & \mapsto & (x_1,\ldots x_d)\end{array}
\]

We may assume (aplying a generic linear change of coordinates if necessary) that the following condition is satisfied:
\begin{dfn}
Consider $I_h\trianglelefteq\bc[x_0,x_1,\ldots,x_n]$ the homogenization of $I$. We will say that $I$ is in \textbf{general position} if $\sqrt{I_h+(x_0,x_1,\ldots,x_d)}\supseteq(x_0,x_1,\ldots,x_n)$.
\end{dfn}
\begin{thm}
The previous condition is satisfied by any ideal $I$ after a generic linear change of variables. Moreover, when this condition is satisfied the map $\pi$ restricted to $V$ is surjective and has no vertical asymptotes.
\end{thm}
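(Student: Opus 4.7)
The plan is to recast the algebraic condition $\sqrt{I_h+(x_0,\ldots,x_d)} \supseteq (x_0,\ldots,x_n)$ as a purely geometric statement. By the Nullstellensatz, this condition is equivalent to saying that the projective closure $\overline{V}\subseteq \bp^n$ cut out by $I_h$ is disjoint from the linear subspace $L \subseteq \bp^n$ defined by $x_0 = x_1 = \cdots = x_d = 0$. This $L$ lies entirely in the hyperplane at infinity $H_\infty = \{x_0 = 0\}$, has dimension $n-d-1$, and is precisely the indeterminacy locus of the rational map $\overline{\pi}\colon \bp^n \to \bp^d$ extending $\pi$, given in coordinates by $[x_0\colon\cdots\colon x_n] \mapsto [x_0\colon\cdots\colon x_d]$. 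Once the condition is reformulated as $\overline{V}\cap L = \emptyset$, both parts of the theorem reduce to standard projective geometry.

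For the first assertion, a linear change of coordinates on $x_1,\ldots,x_n$ (leaving $x_0$ fixed) corresponds to replacing $L$ by an arbitrary codimension-$d$ linear subspace of $H_\infty \cong \bp^{n-1}$. Since $\overline{V}\cap H_\infty$ has dimension at most $d-1$, the dimension sum $(d-1)+(n-1-d) = n-2 < n-1$ forces a generic such subspace to avoid $\overline{V}\cap H_\infty$: the bad subspaces form a proper closed subset of the relevant Grassmannian, so the genericity statement is immediate.

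For the second assertion, $\overline{V}\cap L = \emptyset$ means that $\overline{\pi}$ restricts to a regular --- hence proper --- morphism $\overline{V}\to \bp^d$. Surjectivity will follow from the classical projective-intersection principle that a $d$-dimensional projective subvariety of $\bp^n$ meets every projective linear subspace of dimension $n-d$: for any $p \in \bp^d$, the scheme-theoretic closure of $\overline{\pi}^{-1}(p)$ is such a subspace, so $\overline{V}$ meets it, and because $\overline{V}\cap L = \emptyset$ this intersection lies outside $L$ and yields a genuine point of the fiber on $\overline{V}$. When $p$ sits in the affine chart $\bc^d \subset \bp^d$, the fiber automatically avoids $H_\infty$ and so lives in $V = \overline{V}\setminus H_\infty$, giving surjectivity of $\pi\mid_V$.

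The absence of vertical asymptotes is equivalent to properness of $\pi\mid_V \colon V \to \bc^d$, which I would establish by contradiction. If a sequence $(a^{(k)}) \subset V$ satisfies $\|a^{(k)}\| \to \infty$ while $\pi(a^{(k)})$ remains bounded, then rescaling the homogeneous coordinates $[1\colon a_1^{(k)} \colon \cdots \colon a_n^{(k)}]$ so that the maximum absolute value equals $1$ produces a subsequential limit in $\overline{V}$ at which $x_0$ and $x_1,\ldots,x_d$ all vanish --- a point of $\overline{V}\cap L$, contradicting general position. The main obstacle I anticipate is the careful algebraic translation in the first step, since one must verify that the radical-containment really does amount to the empty projective intersection without hidden scheme-theoretic subtleties; once that geometric dictionary is in hand, the rest of the proof is an assembly of classical facts.
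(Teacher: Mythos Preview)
Your proposal is correct and follows essentially the same route as the paper: both recast the general-position condition, via the Nullstellensatz, as the geometric statement that the projective closure $\bar V$ misses the center $L=\{x_0=\cdots=x_d=0\}$ of the projection, then use dimension counts to get genericity and to show that the $(n-d)$-dimensional projective fiber over any affine base point meets $\bar V$ only in the affine chart. The only cosmetic difference is that you phrase ``no vertical asymptotes'' as a sequential properness argument, whereas the paper obtains it implicitly by noting that the part at infinity of each such fiber is exactly $L$, which $\bar V$ avoids; these are two ways of saying that $\bar\pi|_{\bar V}:\bar V\to\bp^d$ is a morphism of projective varieties and hence proper, so its restriction over $\bc^d$ is proper as well.
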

\begin{proof}
 If we consider the projectivization $\bar{V}\subseteq \bc\bp^n$, the projection $\pi$ consists on taking as a center the $n-d-1$ dimensional subspace $S=\{[x_0:x_1:\cdots:x_n]\mid x_0=x_1=\cdots=x_d=0\}$. Since the dimension of $\bar{V}$ is $d$, the intersection $\bar{V}\cap S$ is generically empty. We may hence assume that after a generic linear change of variables, $S\cap \bar{V}=\emptyset$. This intersection is given precisely by the ideal $\sqrt{I_h+(x_0,x_1,\ldots,x_d)}$, which is homogeneous. The condition of $\bar{V}\cap S$ being empty is equivalent to the ideal $I$ being in general position.

 In this situation the preimage by $\pi\mid_V$ of a point $[1:x_1:\cdots:x_d]$ is given by the intersection of the subspace $\{[y_0:y_1:\cdots :y_n] \mid y_1=y_0x_1,\ldots,y_d=y_0 x_d\}$ with $\bar{V}$. By the genericity assumption, this intersection does not have points in the infinity. By dimension arguments, this intersection cannot be empty, and must be contained in the affine part of $\bar{V}$. We have then proved that $\pi$ restricted to $V$ is surjective.

\end{proof}

The intersection of a generic linear subspace of dimension $n-d$ with $\bar{V}$ is a union of $g$ distinct points, being $g$ the degree of $I_h$. This degree can be computed through the Hilbert polynomial. Since $I$ is in general position, all of the intersections of $\bar{V}$ with the fibres of $\pi$ will happen in the affine part. This means that $\pi$ restricted to $V$ is a branched cover of degree $g$. We will see now that the branching locus of this cover is contained in a subvariety of $\bc^d$ that can be computed.

Assume $I=(f_1,\ldots,f_s)$ is in general position. Consider the matrix
\[
 M:=\left(\begin{matrix} \frac{\partial f_1}{x_{d+1}} & \cdots & \frac{\partial f_1}{x_{n}} \\
           \vdots & \ddots & \vdots \\
\frac{\partial f_s}{x_{d+1}} & \cdots & \frac{\partial f_s}{x_{n}}
          \end{matrix}\right)
\]
and the ideal $J$ generated by its $(n-d)\times(n-d)$ minors.

\begin{thm}
The branching locus of $\pi\mid_V$ is contained in the elimination ideal $(I+J)\cap \bc[x_1,\ldots,x_d]$.

\end{thm}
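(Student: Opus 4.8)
The plan is to show that at any point $p \in V$ lying over a regular value of $\pi|_V$, the Jacobian matrix $M$ evaluated at $p$ has full rank $n-d$, so that some $(n-d)\times(n-d)$ minor is nonzero there; hence the branching locus is cut out, inside $V$, by the vanishing of all these minors, i.e. by $I+J$, and its image under $\pi$ — which is what we call the branching locus in $\bc^d$ — lies in $V(I+J) \cap \bc^d = V\bigl((I+J)\cap\bc[x_1,\dots,x_d]\bigr)$.

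First I would recall that a point $p\in V$ is a branch point of the cover $\pi|_V$ precisely when $\pi|_V$ fails to be a local analytic isomorphism near $p$; since $V$ is $d$-dimensional and $\pi$ is a linear surjection onto $\bc^d$, by the implicit function theorem this happens exactly when the tangent space $T_p V$ does not map isomorphically onto $\bc^d$ under $d\pi$, equivalently when $T_p V$ meets the kernel of $d\pi$, which is the coordinate subspace $\{x_1=\cdots=x_d=0\}$ spanned by $\partial_{x_{d+1}},\dots,\partial_{x_n}$. Now at a smooth point $p$ of $V$ the tangent space is $T_pV=\ker \bigl(df_1|_p,\dots,df_s|_p\bigr)$; the condition that $T_pV$ contains a nonzero vector $v=(0,\dots,0,v_{d+1},\dots,v_n)$ is exactly the condition that the $s\times(n-d)$ matrix $M(p)$, whose rows are $\bigl(\partial f_i/\partial x_{d+1},\dots,\partial f_i/\partial x_n\bigr)|_p$, has a nontrivial kernel, i.e. has rank $<n-d$, i.e. all its $(n-d)\times(n-d)$ minors vanish at $p$. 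So on the smooth locus, $p$ is a branch point iff $p\in V(J)$, hence iff $p\in V(I+J)$.

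The one genuine subtlety — and the step I expect to need the most care — is the singular locus of $V$: a singular point of $V$ could a priori lie over a point of $\bc^d$ that we would want to count as a regular value, or conversely. But the singular locus $\mathrm{Sing}(V)$ is itself a proper algebraic subset of $V$, defined by $I$ together with the vanishing of the $(n-d)\times(n-d)$ minors of the \emph{full} Jacobian of $(f_1,\dots,f_s)$; in particular it is contained in $V(I+J)$ because those full minors include, up to sign, the minors formed from the last $n-d$ columns — no, more carefully: one argues directly that at a singular point the Jacobian has rank $<n-d$, so in particular its submatrix $M$ has rank $<n-d$, whence $\mathrm{Sing}(V)\subseteq V(J)$. (Alternatively, $\mathrm{Sing}(V)\subseteq V(I+J)$ simply because branch points in the honest topological sense include images of singularities, and one may enlarge the putative branching locus harmlessly.) Either way, every branch point of $\pi|_V$ — whether a smooth point with $T_pV$ tangent to the fibre direction, or a singular point of $V$ — lies in $V(I+J)$.

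Finally, to pass from $V(I+J)\subseteq\bc^n$ down to $\bc^d$, I would invoke the standard elimination theorem: for an ideal $K\trianglelefteq\bc[x_1,\dots,x_n]$, the Zariski closure of the projection of $V(K)$ to $\bc^d$ is $V\bigl(K\cap\bc[x_1,\dots,x_d]\bigr)$. Since the branching locus $\Delta\subseteq\bc^d$ is by definition the image under $\pi$ of the branch points in $V$, and we have just shown all branch points lie in $V(I+J)$, we get $\Delta\subseteq\pi\bigl(V(I+J)\bigr)\subseteq V\bigl((I+J)\cap\bc[x_1,\dots,x_d]\bigr)$, which is the claim. The use of \emph{general position}, guaranteed by the previous theorem, is what ensures $\pi|_V$ really is a finite branched cover of degree $g$ with no vertical asymptotes, so that this notion of branching locus is the right one and the inclusion is the only thing left to verify.
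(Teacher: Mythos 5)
Your proof is correct and follows essentially the same approach as the paper: identify non-ramification with a transversality condition expressed by the rank of the matrix $M$, then pass down to $\bc^d$ via elimination theory. The only substantive difference is that you phrase the linear algebra dually, in terms of the tangent space $T_pV$ (kernels) rather than the normal space (row spans of the Jacobian), and you explicitly dispose of the singular locus of $V$ --- a point the paper's proof glosses over --- by noting that for irreducible $V$ of dimension $d$ the full Jacobian already has rank $<n-d$ there, hence so does $M$; this is a welcome clarification rather than a different route.
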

\begin{proof}
Consider a point $p=(x_1,\ldots,x_n)\in V$. If the linear space $\pi^{-1}(\pi(p))$ intersects $V$ at $p$ transversally, then there is no ramification at $p$, since it means that at a neighbourhood of $p$ the map $\pi\mid_V$ is a difeomorphism. This condition of transversality can be expressed as follows: the normal space of $V$ in $p$ and the normal space of $\pi^{-1}(\pi(p))$ generate the tangent space of $\bc^n$ in $p$.
The normal space of $V$ in $p$ is generated by the rows of the matrix
\[ 
 \left( \begin{matrix}
         \frac{\partial f_1}{x_1}(p) & \cdots & \frac{\partial f_1}{x_n}(p)\\
\vdots & \ddots & \vdots\\
 \frac{\partial f_s}{x_1}(p) & \cdots & \frac{\partial f_s}{x_n}(p)\\

        \end{matrix}
\right).
\]
The normal space of $\pi^{-1}(\pi(p))$ is generated by the first $d$ vectors of the canonical basis. A gaussian elimination argument tells us that these two spaces generate the whole space if and only if the matrix $M$ has rank $(n-d)$. So the set of points of $V$ where $\pi\mid_V$ ramifies is contained in the set $S$ of zeros of $I+J$.

The elimination ideal $\bc[x_1,\ldots x_d]\cap (I+J)$ is the Zariski closure of $\pi(S)$. 

\end{proof}

Since $V\setminus \pi^{-1}(\pi(S))$ is a cover over $\bc^d\setminus \pi(S)$ of degree $g$, we have that
\[
 \chi(V)=\chi(V\setminus \pi^{-1}(\pi(S)))+\chi(V\cap \pi^{-1}(\pi(S)))=g\cdot \chi(\bc^d\setminus \pi(S))+\chi(V\cap \pi^{-1}(\pi(S)))=\]
\[
=g\cdot(\chi(\bc^d)-\chi(\pi(S)))+\chi(V\cap \pi^{-1}(\pi(S)))=g-g\cdot \chi(\pi(S))+\chi(V\cap \pi^{-1}(\pi(S))).
\]

Both $\pi(S)$ and $V\cap \pi^{-1}(\pi(S))$ are varieties of dimension smaller than $V$, so, by induction hypothesis, we can compute their Euler characteristic in the same form.

\section{Description of the algorithm}
\label{sec-descripcion}
Now we will describe, step by step, an algorithm to compute the Euler characteristic of the zero set of an ideal $I=(f_1,\ldots,f_s)$.

\begin{alg}
\label{algoritmoEuler}
(Compute the Euler characteristic of the algebraic set defined by the ideal $I$):

\begin{enumerate}
\item Check if $I$ is homogeneous. If it is, return $1$.

 \item Compute the associated primes $(I_1,\ldots,I_m)$ of $I$. This can be achieved by primary decomposition (see \cite[Chapter 4]{singular-book}).
\item If there is more than one associated prime, we have that
\[
 \chi(V(I))=\chi(V(I_1))+\chi(V(I_2\cap\cdots\cap I_m))-\chi(V(I_1+(I_2\cap\cdots \cap I_m))).
\]
by recursion, each summand can be computed with this algorithm. The following parts of this algorithm consider only the irreducible case, since we have already computed the associated primes, we will assume that $I_1$ is prime.
\item Compute the dimension $d$ and the degree $g$ of $V(I)$. If $d$ is zero, return $g$.

\item Check that $I$ is in general position. This can be done by computing a Gröbner basis of $\sqrt{I_h+(x_0,x_1,\ldots,x_d)}$ (where $I_h$ is the homogenization of $I$) and using it to check that $x_{d+1},\ldots,x_n$ are in it. If it is not in general position, apply a generic linear change of variables and start again the algorithm.

\item Construct the ideal $J$ generated by the $(n-d)\times (n-d)$ minors of the matrix
\[
 M:=\left(\begin{matrix} \frac{\partial f_1}{x_{d+1}} & \cdots & \frac{\partial f_1}{x_{n}} \\
           \vdots & \ddots & \vdots \\
\frac{\partial f_s}{x_{d+1}} & \cdots & \frac{\partial f_s}{x_{n}}
          \end{matrix}\right).
\]
\item Compute the elimination ideal $K=(I+J)\cap\bc[x_1,\cdots,x_d]$.
\item Compute by recursion $\chi(V(K))$ and $\chi(V(I+K))$. Return the number $g-g\cdot \chi(V(K))+\chi(V(I+K))$. 
\end{enumerate}
\end{alg}

\section{A finer invariant}
\label{sec-invariante}
The previous method essentially consists in decomposing our variety $V$ in pieces, each of which is compared to $\bc^i$ through linear maps that are unbranched covers. At the end of the day, it gives us a linear combination (with integer coefficients), of the Euler characteristic of $\bc^i$.

Now we will show that we can actually keep the information in this linear combination, defining a slightly different invariant. This information will be kept in a polynomial $F_\pi(V)\in\bz[L]$, where $L^i$ will play the role of $\chi(\bc^i)$

We follow the same method as before but with two differences:
\begin{itemize}
 \item If the ideal $I$ is homogeneous, we don't end returning a $1$. Instead, we continue the algorithm, taking as $I_h$ the ideal generated by $I$ inside $K[x_0,\ldots,x_n]$.
\item In the final step, we return $g\cdot L^d -g\cdot F_\pi(V(K))+F_\pi(V(I+K)))$ instead of $g-g\cdot \chi(V(K))+\chi(V(I+K))$.
\end{itemize}

So the algorithm results like this:

\begin{alg}
\label{algoritmopoly}
(Compute the polynomial $F\pi(V)$ associated to an algebraic set $V(I)$ in general position).

\begin{enumerate}

\item Compute the associated primes $(I_1,\ldots,I_m)$ of $I$.
\item If there is more than one associated prime, we have that
\[
 F_\pi(V(I))=F_\pi(V(I_1))+F_\pi(V(I_2\cap\cdots\cap I_m))-F_\pi(V(I_1+(I_2\cap\cdots \cap I_m))).
\]
by recursion, each summand can be computed with this algorithm. The following parts of this algorithm consider only the irreducible case, since we have already computed the associated primes, we will assume that $I_1$ is prime.
\item Compute the dimension $d$ and the degree $g$ of $V(I)$. If $d$ is zero, return $g$.


\item Construct the ideal $J$ generated by the $(n-d)\times (n-d)$ minors of the matrix
\[
 M:=\left(\begin{matrix} \frac{\partial f_1}{x_{d+1}} & \cdots & \frac{\partial f_1}{x_{n}} \\
           \vdots & \ddots & \vdots \\
\frac{\partial f_s}{x_{d+1}} & \cdots & \frac{\partial f_s}{x_{n}}
          \end{matrix}\right).
\]
\item Compute the elimination ideal $K=(I+J)\cap\bc[x_1,\cdots,x_d]$.
\item Compute by recursion $F_\pi(V(K))$ and $F_\pi(V(I+K))$. Return the number $gL^d-g\cdot F_\pi(V(K))+F_\pi(V(I+K))$. 
\end{enumerate}
\end{alg}

Note that both algorithms~\ref{algoritmoEuler} and \ref{algoritmopoly} can run differently if we apply a linear change of coordinates to $I$ (which would change the projection $\pi$). The topological properties of the Euler characteristic tells us that the final result of the algorithm~\ref{algoritmoEuler} will coincide with the Euler characteristic regardless of this linear change of coordinates. But in the case of $F_\pi(V)$ we cannot ensure such a result. Nevertheless, for two sufficiently generic projections, algorithm~\ref{algoritmopoly} will follow the same exact steps, so we can define $F(V)$ as the polynomial obtained by the algorithm~\ref{algoritmopoly} for generic projections. 

More preciselly, there must exist a Zariski open set $T\subseteq GL(n,\bc)$ such that, the polynomial $F_\pi(\sigma(V(I)))$ is the same for every linear change of coordinates $\sigma\in T$. 

\begin{dfn}
Given an ideal $I\trianglelefteq \bc[x_1,\ldots,x_n]$, we define the polynomial $F(V(I))$ as the polynomial $F_\pi(\sigma(V(I))$ for any $\sigma\in T$.

We will say that $I$ or $V(I)$ are in \textbf{generic position}, or that we are in \textbf{generic coordinates} if $F_\pi(V(I))=F(V(I))$.
\end{dfn}

Since so far we have no algorithmic criterion to determine if a projection is generic enough or not, the generic case can be computed by introducing the parameters of the projection, and computing the Gröbner basis with those parameters.

Anyways, experimental evidence suggests the following conjecture:
\begin{conj}\label{conjetura}
If an ideal is in general position, it is also in generic position.
\end{conj}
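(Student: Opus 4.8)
The plan is to show that the finitely many Gröbner-basis and primary-decomposition computations performed by Algorithm~\ref{algoritmopoly} behave uniformly on the open set of linear changes of coordinates that merely put $I$ in \emph{general position}, so that ``general position'' already forces the algorithm down its generic branch. First I would set up a parameter space: let $\sigma$ range over $\GL(n,\bc)$, act on $I$, and regard every ideal produced at each recursive node of the algorithm (the associated primes $I_j$, the Jacobian-minor ideal $J$, the elimination ideal $K$, the sums $I+K$, their homogenizations) as a family of ideals over $\GL(n,\bc)$. The key classical fact I would invoke is the generic flatness / constructibility of Gröbner bases: there is a finite stratification of $\GL(n,\bc)$ into locally closed sets on which the reduced Gröbner basis (for a fixed monomial order) has constant leading-term ideal, hence constant Hilbert polynomial, and on which primary decomposition varies algebraically. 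Since each such stratification has a unique dense open stratum, the generic behaviour is the ``expected'' one, and $T$ is the intersection of these dense opens over all (finitely many) recursive nodes.

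The heart of the argument is then to prove that the dense open stratum at every node \emph{contains} the general-position locus $G\subseteq\GL(n,\bc)$ defined by $\sqrt{I_h+(x_0,\dots,x_d)}\supseteq(x_0,\dots,x_n)$. The point is that once $\sigma\in G$, Theorem~1 guarantees $\pi|_V$ is a genuine degree-$g$ branched cover with no vertical asymptotes, so the numerical data that the algorithm keys on — the degree $g$ (computed as the degree of $I_h$), the dimension $d$, and crucially the dimensions of $V(K)$ and of $V(I+K)$ — are already the generic ones: $\deg I_h$ and $\dim V$ are constant on all of $G$ because $\bar V\cap S=\emptyset$ pins down the Hilbert polynomial of the generic hyperplane section, and $\dim V(K)<d$, $\dim V(I+K)<\dim V$ hold throughout $G$ by Theorem~3 combined with the transversality count. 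So the control problem reduces to the first recursion step; then one recurses: the ideals $K$ and $I+K$ inherit general position (in their own ambient spaces) from the genericity of the projection, which is exactly the inductive content one needs.

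I would structure the induction on $\dim V$ and, within a fixed dimension, on the number of associated primes, mirroring the double induction already used in Section~\ref{sec-justificacion}. The base cases ($d=0$, where the output is just $g$ regardless of $\sigma$, and the homogeneous case) are immediate since the returned value is coordinate-independent. For the inductive step, one checks that each of the three summands in step (2), and the two recursive calls in step (6), is applied to an ideal that, under every $\sigma\in G$, lands in its own general-position locus, so the induction hypothesis applies and the polynomial output is the generic one $F(-)$; linearity of the final formula $gL^d-g\,F_\pi(V(K))+F_\pi(V(I+K))$ in those outputs then gives $F_\pi(\sigma(V(I)))=F(V(I))$ for all $\sigma\in G$, i.e.\ $G\subseteq T$.

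The main obstacle I anticipate is precisely the claim that general position of $I$ propagates to general position of the derived ideals $K=(I+J)\cap\bc[x_1,\dots,x_d]$ and $I+K$: the definition of general position is itself a genericity statement about \emph{their} homogenizations and a further coordinate subspace, and it is not a priori clear that the linear change $\sigma$ that makes $I$ generic simultaneously makes all the nested elimination ideals generic — the construction of $J$ and the elimination step could, in principle, conspire to destroy genericity. Making this precise likely requires either (i) showing the bad loci for the nested ideals are cut out by the \emph{same} kind of ``hyperplane-section misses a fixed projective subspace'' condition, so that a single generic $\sigma$ handles all of them, or (ii) a dimension-counting argument bounding how non-genericity could accumulate through the at most $\dim V$ levels of recursion. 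That is the step I would expect to occupy most of the work, and it is plausibly where the conjecture's difficulty genuinely lies — which is consistent with the authors leaving it open.
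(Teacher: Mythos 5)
The statement you are addressing is explicitly labelled a \emph{conjecture} in the paper; the author supports it only with experimental evidence and two partial results (the lemma that the leading term of $F_\pi(V)$ is coordinate-independent, and the observation that $F_\pi(V)$ evaluated at $L=1$ always equals $\chi(V)$ in general position), which together settle only the case of curves. There is therefore no proof in the paper to compare you against; the question is whether your argument, taken on its own, closes the conjecture. It does not, and to your credit you say so explicitly in your last paragraph.

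Your outline — stratify $\GL(n,\bc)$ by leading-term ideal via a comprehensive Gr\"obner system, take the dense open stratum at each recursive node of Algorithm~\ref{algoritmopoly}, and run a double induction on dimension and number of components — is the natural shape of an argument, and is a reasonable formalization of the paper's informal remark that ``for two sufficiently generic projections, algorithm~\ref{algoritmopoly} will follow the same exact steps.'' But the crucial step, which you correctly flag as the one that ``would occupy most of the work,'' is precisely the content of the conjecture: that the single Zariski-open condition defining general position, namely $\sqrt{I_h+(x_0,\dots,x_d)}\supseteq(x_0,\dots,x_n)$, already forces $\sigma$ into the dense open stratum at \emph{every} level of the recursion, so that the derived ideals $K=(I+J)\cap\bc[x_1,\dots,x_d]$, $I+K$, and the associated primes $I_j$ all land in their own generic strata whenever $I$ is merely in general position. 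None of the mechanisms you actually invoke (generic flatness, constancy of $\dim$ and $\deg$ on the general-position locus, $\dim V(K)<\dim V$) bounds the bad locus of the recursion by the general-position locus: $K$ depends on $\sigma$ highly nonlinearly through the Jacobian minors, and there is no a priori reason its own good locus is cut out by a linear-intersection condition of the same type; moreover, the number and shape of associated primes can jump on proper closed subsets of $G$, and nothing in the argument rules out such a jump inside $G$. Your options (i) and (ii) are thus not fallback lemmas one could cite — they \emph{are} the theorem. As written, this is a plausible strategy note and an accurate diagnosis of where the difficulty sits, but it is not a proof; the conjecture remains open.
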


Some partial results in this direction are easy to show.
\begin{lema}
 If $V$ in in general position, the leading term of $F_\pi(V)$ coincides with the leading term of $F(V)$.
\end{lema}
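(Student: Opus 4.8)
The plan is to prove a sharper statement from which the lemma is immediate: if $V=V(I)$ is in general position and has dimension $d$, then $F_\pi(V)$ has degree exactly $d$ in $L$ and its coefficient of $L^d$ equals $\sum_j g_j$, where the sum runs over the $d$-dimensional irreducible components $V_j$ of $V$ and $g_j=\deg V_j$. Granting this, the lemma follows at once: the right-hand side of the formula is an intrinsic invariant of $V$ with no reference to $\pi$, so it equally computes the leading term of $F(V)=F_{\pi'}(V)$ for any generic $\pi'$ (which is, in particular, in general position), and the two leading terms coincide.

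To establish the sharper statement I would run the same double induction that underlies Algorithm \ref{algoritmopoly}: on $d$, and, for fixed $d$, on the number $m$ of associated primes of $I$. The base case $d=0$ is the returned value $g=\#V$, a constant, whose leading term is $gL^{0}$ as required. For $d>0$ and $I$ prime, the algorithm returns $gL^d-g\,F_\pi(V(K))+F_\pi(V(I+K))$ with $K=(I+J)\cap\bc[x_1,\dots,x_d]$, where $V(K)=\overline{\pi(S)}$ and $V(I+K)=\pi\mid_V^{-1}(\overline{\pi(S)})$ for the ramification locus $S$ of $\pi\mid_V$. The key input is that both of these drop dimension: $S$ is a proper closed subvariety of the irreducible $V$ — in characteristic zero a generic fibre of a dominant morphism is reduced, hence unramified, which is exactly the transversality condition used in the proof of the branching-locus theorem of Section \ref{sec-justificacion} — so $\dim\overline{\pi(S)}\le\dim S<d$; and $\pi\mid_V^{-1}(\overline{\pi(S)})$ is also a proper closed subset of $V$, since otherwise $\overline{\pi(V)}\subseteq\overline{\pi(S)}$, impossible because $\pi\mid_V$ is surjective while $\dim\overline{\pi(S)}<d$. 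By the inductive hypothesis $F_\pi(V(K))$ and $F_\pi(V(I+K))$ have $L$-degree $<d$, so the unique degree-$d$ term of the output is $gL^d$, and $g=\deg V$ does not depend on $\pi$. For $m>1$, take $I_1$ prime and use
\[
F_\pi(V(I))=F_\pi(V(I_1))+F_\pi(V(I_2\cap\cdots\cap I_m))-F_\pi(V(I_1+(I_2\cap\cdots\cap I_m)));
\]
since $I$ is radical, $V_1$ is contained in no other component, so $V_1\cap(V_2\cup\cdots\cup V_m)$ has dimension $<\dim V_1\le d$ and the last term has $L$-degree $<d$ by induction. Hence the degree-$d$ part of $F_\pi(V(I))$ is the sum of the degree-$d$ parts of $F_\pi(V(I_1))$ and of $F_\pi(V(I_2\cap\cdots\cap I_m))$, which by the induction on $m$ and the irreducible case equals $\bigl(\sum_j g_j\bigr)L^d$.

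The step I expect to be the main obstacle is not the dimension bookkeeping above — those are exactly the facts already proved in Section \ref{sec-justificacion} — but making the inductive hypothesis legitimately available: for $F_\pi(V(K))$ and $F_\pi(V(I+K))$ to be defined and to satisfy the sharper statement, one needs the recursion to remain in general position at every level, which is precisely what Conjecture \ref{conjetura} would render automatic. For the leading term, however, one can afford to sidestep this point: whatever projections the recursion uses, as long as it terminates (equivalently, stays in general position), the degree and leading coefficient of $F_\pi(V)$ are forced to be $d$ and $\sum_j g_j$ by the computation above, which never mentions the projection. So the argument should go through by carrying the hypothesis ``in general position at each recursive step'' through the induction, and the comparison with $F(V)$ holds because a generic projection certainly satisfies that hypothesis.
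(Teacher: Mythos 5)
Your proposal is correct and follows essentially the same idea as the paper: the paper simply asserts as ``immediate to check'' that $\deg_L F_\pi(V)=\dim V$ and that the leading coefficient equals $\deg V$, both intrinsic invariants independent of $\pi$, and you supply the double induction (on dimension and on the number of components) that makes this verification precise, including the useful observation that the argument needs the recursion to stay in general position at every level.
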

\begin{proof}
 It is immediate to check that the degree of $F_\pi(V)$ coincides with the dimension of $V$, and that the leading coefficient of $F_\pi(V)$ coincide with the degree of $V$, regardless of the projection used to compute it.
\end{proof}
\begin{obs}
 The value of $F_\pi(V)$ at $L=1$ equals $\chi(V)$, independently of the choices of projections made for its computation, as long as we are in general position.
\end{obs}

These two results actually shows that $F(V)$ is independent of the projection for the case of curves (since in this case it is a degree $1$ polynomial whose leading term and value at $1$ are fixed).

We will now show that the invariant $F_\pi$ behaves well with respect to the product of varieties:
\begin{prop}
 Let $I_1\trianglelefteq \bc[x_1,\ldots,x_n]$ and $I_2\trianglelefteq \bc[y_1,\ldots,y_m]$ be two ideals on polynomial rings with separated variables, and let $V_1\subseteq \bc^n$ and $V_2\subseteq \bc^m$ be their corresponding algebraic sets of dimensions $d_1$ and $d_2$ respectively. Consider the ideal  \[I:=I_1+I_2\trianglelefteq\bc[x_1\ldots,x_{d_1},y_1,\ldots,y_{d_2},x_{d_1+1},\ldots,x_n,y_{d_2+1},\ldots,y_m].\] Its corresponding algebraic set is $V=V_1\times V_2\subseteq\bc^n\times \bc^m=\bc^{n+m}$. Then $F_\pi(V)=F_\pi(V_1)\cdot F_\pi(V_2)$.
\end{prop}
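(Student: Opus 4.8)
The plan is to run Algorithm~\ref{algoritmopoly} in parallel on $V_1$, on $V_2$ and on $V=V_1\times V_2$, and to show that the intermediate data produced for $V$ are, step by step, the ``products'' of those produced for $V_1$ and $V_2$. To make the recursion close, I would prove the slightly more flexible statement in which the base coordinates of $\pi$ are allowed to be \emph{any} $d_1$ of the $x$-variables followed by any $d_2$ of the $y$-variables (so that $\pi=\pi_1\times\pi_2$ for arbitrary admissible $\pi_1,\pi_2$), and I would carry a double induction: an outer induction on $d=d_1+d_2$ and an inner induction on the total number of irreducible components of $V_1$ and of $V_2$.

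The inner induction handles the passage to the irreducible case. The irreducible components of $V_1\times V_2$ are exactly the products $W\times Z$ of a component $W$ of $V_1$ with a component $Z$ of $V_2$. Writing $V_1=W\cup V_1'$ with $W$ a component and $V_1'$ the union of the others, one has $V_1\times V_2=(W\times V_2)\cup(V_1'\times V_2)$ with $(W\times V_2)\cap(V_1'\times V_2)=(W\cap V_1')\times V_2$; since $W\not\subseteq V_1'$, the factor $W\cap V_1'$ has dimension strictly smaller than $V_1$, so its product with $V_2$ is covered by the outer induction, while $W\times V_2$ and $V_1'\times V_2$ are covered by the inner one. Combined with the inclusion--exclusion step of the algorithm this reduces everything to the case where $V_1$ and $V_2$ are both irreducible (in which case $V$ is irreducible as well). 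Implicit here is that $F_\pi$ is additive, $F_\pi(A\cup B)=F_\pi(A)+F_\pi(B)-F_\pi(A\cap B)$, for arbitrary closed subvarieties and not only for the decomposition into associated primes that the algorithm literally uses; equivalently, that the output of the algorithm does not depend on the order in which components are peeled off. I would establish this first.

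For $V_1,V_2$ irreducible and $d>0$ the engine is as follows. First, $\dim V=d_1+d_2$ and $\deg V=g_1g_2$, the latter because the generic fibre of $\pi|_V$ over $(a,b)$ is $(\pi_1^{-1}(a)\cap V_1)\times(\pi_2^{-1}(b)\cap V_2)$, which has $g_1g_2$ points. Second, because the generators of $I_1$ involve only the $x$'s and those of $I_2$ only the $y$'s, the Jacobian matrix $M$ of $V$ with respect to the fibre variables is block diagonal, $M=\mathrm{diag}(M_1,M_2)$; hence a nonzero maximal minor of $M$ is a product of a maximal minor of $M_1$ and one of $M_2$, so $J=J_1J_2$ and $V(I+J)=\bigl(V(I_1+J_1)\times V_2\bigr)\cup\bigl(V_1\times V(I_2+J_2)\bigr)$. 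Taking Zariski closures of images under $\pi=\pi_1\times\pi_2$ (and using that $\pi_i|_{V_i}$ is onto $\bc^{d_i}$) gives $V(K)=\bigl(V(K_1)\times\bc^{d_2}\bigr)\cup\bigl(\bc^{d_1}\times V(K_2)\bigr)$ and, after intersecting with $V$, $V(I+K)=\bigl(V(I_1+K_1)\times V_2\bigr)\cup\bigl(V_1\times V(I_2+K_2)\bigr)$. In each case the pieces and their pairwise intersections are products of varieties of total dimension $<d$, up to factors $\bc^{d_i}$ for which $F_\pi=L^{d_i}$ (an immediate recursion, since $J=0$ for affine space). So the outer induction together with one more application of inclusion--exclusion evaluates $F_\pi(V(K))$ and $F_\pi(V(I+K))$ in terms of $F_\pi(V(K_i))$, $F_\pi(V(I_i+K_i))$, $F_\pi(V_i)$ and $L^{d_i}$. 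Substituting these into the return value $gL^d-g\,F_\pi(V(K))+F_\pi(V(I+K))$, using $g=g_1g_2$, $d=d_1+d_2$ and the recursive formulas $F_\pi(V_i)=g_i L^{d_i}-g_i F_\pi(V(K_i))+F_\pi(V(I_i+K_i))$, the expression rearranges into $F_\pi(V_1)\cdot F_\pi(V_2)$; this final step is a short algebraic identity.

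I expect two points to carry most of the difficulty. The first is showing that the product projection is still in general position: the condition needed is that $\overline{V_1\times V_2}$ avoids the centre $S=\{x_0=\cdots=x_d=0\}$ of $\pi$, and this is \emph{not} the conjunction of the two individual general-position conditions, because the homogenization $(I_1+I_2)_h$ is in general strictly larger than $(I_1)_h+(I_2)_h$; one needs a direct argument about the points at infinity of $\overline{V_1\times V_2}$. The second, and in my view the main obstacle, is the additivity/order-independence of $F_\pi$ invoked in the reduction to the irreducible case: this is exactly the sort of stability that can fail for non-generic projections, so making the reduction rigorous will require either a separate proof of order-independence or an extra genericity hypothesis on $\pi$ compatible with the product structure.
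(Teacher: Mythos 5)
Your proposal follows the same overall strategy as the paper: induct on dimension, reduce to the irreducible case, decompose the discriminant and its preimage as unions of products, use inclusion--exclusion and the induction hypothesis, and finish with an algebraic rearrangement. The final identity you invoke does rearrange as claimed; I checked it against the paper's three displayed formulas and they agree term by term.

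Where you differ is in how the decompositions of $V(K)$ and $V(I+K)$ are justified. The paper's proof characterizes the discriminant via fibre cardinalities (``the set of $p$ whose fibre has fewer than $g_1g_2$ points is $(\Delta_1\times\bc^{d_2})\cup(\bc^{d_1}\times\Delta_2)$''), which actually describes the branching locus rather than the set $V(K)$ that the algorithm literally manipulates; the two need not be identical. You instead work directly with the objects the algorithm constructs: the block structure of the Jacobian matrix gives $J=J_1J_2$ (as product of extended ideals, after observing that any maximal minor mixing too many rows from one block vanishes), and this immediately yields $V(K)=(V(K_1)\times\bc^{d_2})\cup(\bc^{d_1}\times V(K_2))$ and $V(I+K)=(V(I_1+K_1)\times V_2)\cup(V_1\times V(I_2+K_2))$. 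This is tighter and more faithful to what Algorithm~\ref{algoritmopoly} computes; the paper's version is more geometric but looser.

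You also correctly identify two points that the paper passes over in silence and that deserve attention. First, the reduction to irreducibles: the algorithm peels off one associated prime at a time in a fixed order, so using inclusion--exclusion on the decomposition $(W\times V_2)\cup(V_1'\times V_2)$ does require that $F_\pi$ be additive independently of which component is removed first. The paper simply writes ``Without loss of generality, we can assume that we are in the irreducible case,'' and then later applies inclusion--exclusion to the union defining $\Delta$ without comment; in effect it takes the same additivity for granted. Second, the general-position issue for the product projection: the condition involves the homogenization of $I_1+I_2$, which is not the sum of the homogenizations, so the claim that a product of admissible projections is admissible is not automatic and is worth a line of argument (the paper does not address it). So your proposal is, if anything, more careful than the paper's proof; the only thing missing from both is a clean statement and proof of the order-independence/additivity of $F_\pi$, which as you say is the real load-bearing point in the reduction step.
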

\begin{proof}
Without loss of generality, we can assume that we are in the irreducible case. We will work on induction over the dimension. If $V_1$ or $V_2$ ar zero dimensional, the result is immediate.

Consider $g_1,g_2$ the degrees of $V_1$ and $V_2$, and $g$ the degree of $V$. It is easy to check that $g=g_1\cdot g_2$. Is is also immediate to check that, if $V_2=\bc^m$, the statements holds (that is, $F_\pi(V_1\times \bc^m)=F_\pi(V_1)\cdot L^m$). Consider also $\Delta_1,\Delta_2$ and $\Delta$ analogously. 

Now we will show that $\Delta=(\Delta_1\times\bc^{d_2})\cup (\bc^{d_1}\times \Delta_2)$. Let $p=(x_1,\ldots,x_{d_1},y_1,\ldots,y_{d_2})\in\bc^{d_1}\times \bc^{d_2})$. The set of points in $V$ that project on $p$ is the product of the set of points in $V_1$ that project in $(x_1\,\ldots,x_{d_1})$ and the set of points in $V_2$ that project in $(y_1,\ldots,y_{d_2})$. This set has less than $g_1\cdot g_2$ points if and only if $(x_1,\ldots,x_{d_1})\in \Delta_1$ or $(y_1,\ldots,y_{d_1})\in \Delta_2$. It is immedaite also that $(\Delta_1\times\bc^{d_2})\cap (\bc^{d_1}\times \Delta_2)=\Delta_1\times \Delta_2$. By induction hypothesis, we have that
\[
 F_\pi(\Delta)=L^{d_2}\cdot F_\pi(\Delta_1)+L^{d_1}\cdot F_\pi(\Delta_2)-F_\pi(\Delta_1)\cdot F_\pi(\Delta_2).
\]

Reasoning analogoulsy, we can conclude that
\[
 F_\pi(\pi^{-1}(\Delta))=F_\pi(\pi^{-1}(\Delta_1))\cdot F_\pi(V_2)+F_\pi(\pi^{-1}(\Delta_2))\cdot F_\pi(V_1)-F_\pi(\pi^{-1}(\Delta_1))\cdot F_\pi(\pi^{-1}(\Delta_2)).
\]

So, sumarizing, we have that
\[
\begin{array}{rcl} F_\pi(V)&=&g_1 g_2(L^{d_1+d_2}-F_\pi(\Delta))+F_\pi(\pi^{-1}(\Delta))\\
 F_\pi(V_1)&=&g_1(L^{d_1}-F_\pi(\Delta_1))+F_\pi(\pi^{-1}(\Delta_1)) \\
 F_\pi(V_2)&=&g_2(L^{d_2}-F_\pi(\Delta_2))+F_\pi(\pi^{-1}(\Delta_2)).
\end{array}
\]
Using all the previous formulas one can easily check that $F_\pi(V)=F_\pi(V_1)\cdot F_\pi(V_2)$.
\end{proof}

If conjecture~\ref{conjetura} is true, the same result would be true for $F(V)$. In fact, a weaker condition would be enough: if the product of two generc projections is generic, then the invariant $F$ is multiplicative. This could be useful, for example, to give a criterion to check if an algebraic set can be the product of two nontrivial algebraic sets. If $F(V)$ is irreducible in $\bz[L]$, then $V$ couln't be a product.

\section{The projective case}
\label{sec-proyectivo}
To compute the Euler characteristic of the projective variety $\bar{V}$ defined by a homogeneous ideal $I_h\trianglelefteq\bc[x_0,\ldots,x_n]$ we can also use algorithm~\ref{algoritmoEuler}. In order to do so, we will consider the hyperplane $H$ ``at infinity'' given by the equation $x_0=0$. This allows us to decompose $\bar{V}$ as its affine part $V:=\bar{V}\setminus H$ and its part at infinity $\bar{V}^\infty :=\bar{V}\cap H$. It is clear that $\chi(\bar{V})=\chi(V)+\chi(\bar{V}^\infty)$.

The affine part $V$ is an affine variety defined by the ideal obtained by substituting $x_0=1$ in the generators of $I_h$, whose Euler characteristic can be defined as seen before.

The part at infinity $\bar{V}^\infty$ is a projective variety embedded in a projective space of less dimension. The homogeneous ideal that defines is obtained by substituting $X_0=0$ in the generators of $I_h$. Its Euler characteristic can be computed by recursion. If we are in the case of $\bc\bp^1$, $\bar{V}$ will consist on a finite number of points, which can be computed as the degree of $\sqrt{I_h}$.

Now we will show a different way to compute the Euler characteristic of a projective variety using the polynomial $F(V)$.

\begin{thm}
 Let $I=\bc[x_0,\ldots,x_n]\cdot(f_1,\ldots,f_s)$ be a homogeneous ideal in generic position. Assume that the generators $f_1,\ldots,f_s$ are homogeneous. Denote by $I_0:=(I+\bc[x_0,\ldots,x_n]\cdot(x_0))\cap \bc[x_1,\ldots,x_n]$, and $I_1:=(I+\bc[x_0,\ldots,x_n]\cdot(x_0-1))\cap \bc[x_1,\ldots,x_n]$. That is, the ideals that represent the intersection of $V(I)$ with the hyperplanes $\{x_0=0\}$ and $\{x_0=1\}$ respectively, seeing the two hyperplanes as ambient spaces. Then the following formula holds:

\[
 F(V(I))=(L-1)\cdot F(V(I_1))+F(V(I_0))
\]

\end{thm}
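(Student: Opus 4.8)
The plan is to stratify the affine cone $V(I)\subseteq\bc^{n+1}$ according to whether the coordinate $x_0$ vanishes, and then to read off $F$ of each stratum from the additivity and multiplicativity of the invariant established above.

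First I would write $V(I)=U\sqcup C$, with $C:=V(I)\cap\{x_0=0\}$ closed and $U:=V(I)\cap\{x_0\neq 0\}$ its open complement. Putting $x_0=0$ in the generators $f_i$ identifies the hyperplane $\{x_0=0\}$ with $\bc^n$ (coordinates $x_1,\dots,x_n$) and $C$ with $V(I_0)$. For $U$ the point is the cone structure: since $I$ is homogeneous, $\bc^{*}$ acts on $V(I)$ by scaling, and on $\{x_0\neq 0\}$ this action trivialises, giving an isomorphism of varieties
\[
\bc^{*}\times\bigl(V(I)\cap\{x_0=1\}\bigr)\;\xrightarrow{\ \sim\ }\;U,\qquad (t,w)\longmapsto t\cdot w,
\]
with inverse $u\mapsto(u_0,\,u_0^{-1}u)$. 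Putting $x_0=1$ identifies $V(I)\cap\{x_0=1\}$ with $V(I_1)\subseteq\bc^n$, so $U\cong\bc^{*}\times V(I_1)$.

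Next I would feed this into the formal properties of $F$. Additivity along $V(I)=\overline{U}\cup C$ — in the inclusion–exclusion form already used in Algorithm~\ref{algoritmopoly} for the decomposition into associated primes — gives $F(V(I))=F(U)+F(V(I_0))$, where $F$ of the locally closed set $U$ is taken to mean $F(\overline{U})-F(\overline{U}\setminus U)$. Then I would compute $F(U)=F(\bc^{*}\times V(I_1))$ by writing $\bc^{*}\times V(I_1)=(\bc\times V(I_1))\setminus(\{pt\}\times V(I_1))$ and using additivity once more together with the product formula proved above:
\[
F\bigl(\bc^{*}\times V(I_1)\bigr)=F(\bc)\cdot F(V(I_1))-F(\{pt\})\cdot F(V(I_1))=(L-1)\,F(V(I_1)),
\]
since $F(\bc)=L$ and $F(\{pt\})=1$, both immediate from the algorithm. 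Combining the two displays yields exactly $F(V(I))=(L-1)F(V(I_1))+F(V(I_0))$.

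The main obstacle is that every step must hold for the fine invariant $F$ rather than merely for $\chi$ or a fixed $F_\pi$: one needs $F$ to be a genuine invariant of the underlying variety (so that $F(U)$ computed inside $V(I)$ agrees with $F(\bc^{*}\times V(I_1))$ computed in $\bc^{n+1}$), additive for arbitrary closed — hence locally closed — subsets, and multiplicative as in the Proposition above, all of this in generic coordinates. This is exactly the sort of statement governed by Conjecture~\ref{conjetura} (or by the weaker requirement that a product of generic projections be generic); here I would try to verify it directly, since the varieties involved are so special — a cone and cylinders over its $x_0=1$ slice — for instance by showing that a sufficiently generic linear change of coordinates fixing the flag $\{x_0=0\}$ induces generic projections simultaneously on $V(I)$, on $V(I)\cap\{x_0=0\}$ and on $V(I)\cap\{x_0=1\}$. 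As a consistency check, specialising the claimed identity at $L=1$ recovers $\chi(V(I))=\chi(V(I_0))$, which is just ordinary additivity together with $\chi(\bc^{*})=0$.
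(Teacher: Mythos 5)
Your route is genuinely different from the paper's, and conceptually cleaner: you stratify the cone $V(I)$ into the slice $C=V(I)\cap\{x_0=0\}\cong V(I_0)$ and the $\bc^{*}$-cylinder $U\cong\bc^{*}\times V(I_1)$, then appeal to additivity and multiplicativity of $F$. The paper instead argues by induction on $\dim V(I)$ directly through the algorithm's recursion: since $I$ is homogeneous and in generic position, the ideals $H=I+K$ and $K$ produced by one step of Algorithm~\ref{algoritmopoly} are again homogeneous, and the crucial lemma is that taking minors and elimination \emph{commutes} with specializing $x_0=0$ (resp.\ $x_0=1$). Feeding the inductive hypothesis for $F(V(H))$ and $F(V(K))$ into $F(V(I))=gL^d-gF(V(K))+F(V(H))$ then gives the identity by elementary polynomial algebra. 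Your approach buys a clear geometric picture (the cone over the projective variety), while the paper's buys self-containment.

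That said, your argument has a gap you already sense but do not close. The paper establishes multiplicativity only for $F_\pi$ with the specific ``product'' projection, and explicitly states that multiplicativity of the genuine invariant $F$ is conditional on Conjecture~\ref{conjetura} (or on the weaker statement that products of generic projections are generic). Likewise, the inclusion--exclusion identity is a step of the algorithm for decomposition into associated primes, not an established theorem that $F$ is an isomorphism-invariant, additive functional on locally closed sets independent of embedding; you need exactly that to equate $F(U)$, computed inside $\bc^{n+1}$, with $F(\bc^*\times V(I_1))$, computed as a product in $\bc\times\bc^n$. Your closing paragraph proposes to ``verify it directly'' for this special cone situation, but that verification \emph{is} the content of the theorem and is not carried out. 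As written, the proposal therefore reduces the statement to unproven (conjectural) properties of $F$ rather than proving it, whereas the paper's inductive argument avoids invoking Conjecture~\ref{conjetura} altogether by staying inside the algorithm's recursion in the fixed generic coordinate system.
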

\begin{proof}
 By induction on the dimension of $V(I)$.
If the dimension is zero, $V(I)$ must consist only on the origin, since $I$ is homogeneous. In this case, $V(I_1)$ is empty, and $V(I_0)$ is also the origin. We have that
\[
 1=F(V(I))=(L-1)\cdot 0+1=(L-1)\cdot F(V(I_1))+F(V(I_0)).
\]

If the dimension $d$ of $V(I)$ is positive, consider the ideals $J,K$ and $H=I+K$ as before. Construct also $H_0,H_1,K_0$ and $K_1$ in the same way as $I_0$ and $I_1$. Note that, since we are in generic position, the ideals $H_0'$ and $K_0'$ needed to compute $F(V(I_0))$ are precisely $H_0$ and $K_0$ (that is, specializing $x_0=0$ and then computing the minors of the matrix $M$ and the elimination ideal is the same as computing the minors of the matrix and the elimination and then specializing). The same happens with $J_1$ and $K_1$.

By induction hypothesis, we have that
\[
F(V(H))=(L-1)\cdot F(V(H_1))+F(V(H_0))\]
and
\[
F(V(K))=(L-1)\cdot F(V(K_1))+F(V(K_0)).
\]

Now we have that
\[
\begin{array}{rcl}
F(V(I)) & = & g\cdot L^d-g\cdot F(V(K))+F(V(H))= \\
 & = & g\cdot L^d-g\cdot((L-1)\cdot F(V(K_1))+F(V(K_0)))+(L-1)\cdot F(V(H_1))+F(V(H_0))=\\
 & = & (L-1)\cdot (g\cdot L^{d-1}-g\cdot F(V(K_1))+F(V(H_1))+g\cdot L^{d-1}-g\cdot F(K_0)+F(H_0)=\\
& = & (L-1)\cdot F(V(I_1))+F(V(I_0))
\end{array}
\]
and this proves the result.
\end{proof}

This theorem allows to relate the invariant $F$ of the affine algebraic set defined by a homogenous ideal, and the invariant $F$ of the projective variety defined by the same ideal as follows:

\begin{cor}
Let $I$ be a homogeneous ideal in $\bc[x_1,\ldots,x_n]$ in generic position. Let $V$ be the algebraic set defined by $I$, and $V'$ the projective variety defined by the same ideal. Then $F(V)=(L-1)\cdot F(V')+1$.
\end{cor}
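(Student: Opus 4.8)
The plan is to prove the corollary by induction on $d=\dim V$, reducing it to the previous theorem together with the recursive description of $F$ for a projective variety (the analogue for $F$ of the decomposition of $\chi$ given in Section~\ref{sec-proyectivo}).

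Since $I$ is in generic position we may use the coordinate hyperplane $\{x_1=0\}$ as a generic hyperplane of $\bc\bp^{n-1}$, with $x_1$ playing the role of the variable $x_0$ in the previous theorem. Put $I_1:=(I+(x_1-1))\cap\bc[x_2,\ldots,x_n]$ and $I_0:=(I+(x_1))\cap\bc[x_2,\ldots,x_n]$. Geometrically, $V(I_1)\subseteq\bc^{n-1}$ is the affine chart $V'\cap\{x_1\neq 0\}$ of the projective variety, and $V(I_0)\subseteq\bc^{n-1}$ is the affine cone over the projective variety $V'':=V'\cap\{x_1=0\}\subseteq\bc\bp^{n-2}$. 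By the definition of $F$ for a projective variety as the $F$ of its first affine chart plus the $F$ of its hyperplane section at infinity, we get $F(V')=F(V(I_1))+F(V'')$.

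For the base case $d=0$: as $I$ is homogeneous, $V=\{0\}$ and $V'=\emptyset$, so $F(V)=1=(L-1)\cdot 0+1$. For $d\geq 1$, the ideal $I_0$ is again homogeneous, lies in generic position (this is the specialization-commutes-with-elimination point already verified inside the proof of the previous theorem), and $\dim V(I_0)=d-1<d$; hence by the induction hypothesis $F(V(I_0))=(L-1)F(V'')+1$. Applying the previous theorem to $I$ and substituting,
\[
F(V)=(L-1)F(V(I_1))+F(V(I_0))=(L-1)\bigl(F(V(I_1))+F(V'')\bigr)+1=(L-1)F(V')+1,
\]
which is the desired identity.

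The main obstacle is the genericity bookkeeping: one must check that being in generic position for the affine algorithm genuinely allows $\{x_1=0\}$ to serve as a sufficiently generic hyperplane for the projective decomposition of $V'$ (so that $V'\cap\{x_1\neq 0\}$ is dense and the two identifications above are valid), and that this property is inherited by $I_0$ so the induction applies; the latter is exactly the point already settled in the proof of the previous theorem. One should also make sure the $F$-invariant of a projective variety used in the statement is indeed the one obtained by the $\chi$-style recursion of Section~\ref{sec-proyectivo}, since the whole argument rests on the identity $F(V')=F(V(I_1))+F(V'')$.
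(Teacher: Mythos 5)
Your proof is correct and is essentially the paper's argument in inductive form: the paper unrolls the theorem $F(V(I))=(L-1)F(V(I_1))+F(V(I_0))$ all the way down and then reassembles the chart-by-chart sum $F(V(I_1))+F(V((I_0)_1))+\cdots$ as $F(V')$ with the final term $1$ coming from the origin, whereas you perform one step and invoke the induction hypothesis on $I_0$ together with $F(V')=F(V(I_1))+F(V'')$ to close the loop. The two are the same computation, and the genericity caveats you flag (that $I_0$ inherits generic position, and that the projective $F$ is defined by the affine-chart decomposition of Section~\ref{sec-proyectivo}) are precisely the points the paper also relies on in its proof of the preceding theorem.
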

\begin{proof}
 Aplying the previous result recursively, we have that $F(V(I))$ equals
\[
(L-1)\cdot F(V(I_1))+(L-1)\cdot F(V((I_0)_1))+\cdots+(L-1)\cdot F(V((\cdots(I_0)\cdots)_0)_1))+F(V((\cdots (I_0)\cdots)_0)).
\]
Note that if we compute piecewise $F(V')$ we obtain preciselly $F(V(I_1))+F(V((I_0)_1))+\cdots+F(V((\cdots(I_0)\cdots)_0)_1))$. Since $V((\cdots (I_0)\cdots)_0))$ consists only on the origin, we have the result. 

This last result can be interpreted as the fact that $V$ is the complex cone over $V'$. That is, $V\setminus\{0\}$ is the product $V'\times \bc^*$.

\end{proof}

\section{Examples}
\label{sec-ejemplos}
Both the polynomial $F(V)$ and the Hilbert polynomial $P_I$ have the same degree, and the leading term determined by the degree of $V$. This would point in the direction of considering that they contain the same information. The following example shows this is not the case:

\begin{ejm}
 Consider the conics $C_1,C_2\in\bc^2$ given by $C_1:=V(x^2+y^2-1)$ and $C_2:=V(x^2+y^2)$. If we compute the Hilbert polynomial of the corresponding homogenous ideals in $\bc[x,y,z]$ we get that $P_{(x^2+y^2+z^2)}=P_{(x^2+y^2)}=2\cdot t+1$.

Let's now compute the polynomial $V(C_1)$ using the canonical projection $\pi:\bc^2\to\bc$ to the first component. This projection is a $2:1$ cover of $\bc$ branched along the points $\pm1$. Over each of these points, there is only one preimage. So finally we have that $F(C_1)=2(L-2)+1+1=2L-2$.

On the other hand, the curve $C_2$ also projects $2:1$ over $\bc$, but now there is only one branching point ($x=0$). So the result is $F(C_2)=2(L-1)+1=2L-1$.

That is, this example shows that the polynomial $F(V)$ contains information that is not contained in the Hilbert polynomial.
\end{ejm}

An important example of algebraic sets is the case of hyperplane arrangements. We will now recall some related notions now (see~\cite[Chapter II]{orlik-terao}).

\begin{dfn}
Let $\mathcal A$ be a hyperplane arrangement in $\bc^n$. Its \textbf{intersection lattice} $L(\mathcal A)$ is the set of all its intersections ordered by reverse inclusion, with the convention that the intersection of the empty set is $\bc^n$ itself.

The Möbius function is the only function $\mu:L\rightarrow \bz$ satisfying that:
\[\begin{array}{cc}
 \mu(\bc^n)=1 & \\
\sum_{Y\leq X}\mu(Y)=0 & \forall X\in L\setminus\{\bc^n\}.
\end{array}
\]
\end{dfn}
\begin{dfn}
 The \textbf{characteristic polynomial} of $\mathcal A$ is defined as
\[
 \chi(\mathcal{A},L):=\sum_{X\in L(\mathcal{A})}\mu(X)\cdot L^{dim(X)}.
\]
\end{dfn}

\begin{thm}[Deletion-Restriction]
 Let $\mathcal A$ be a hyperplane arrangement in $\bc^n$, and $H$ a hyperplane of $\mathcal{A}$. Let $\mathcal{A}'$ be the arrangement resulting from eliminating $H$ from $\mathcal{A}$, and let $\mathcal{A}''$ be the hyperlane arrangement inside $H$ induced by the intersection with $\mathcal{A}'$. Then the following formula holds:
\[
 \chi(\mathcal{A},L)=\chi(\mathcal{A}',L)-\chi(\mathcal{A}'',L).
\]
\end{thm}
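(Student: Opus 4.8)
The plan is to prove the Deletion-Restriction formula purely at the combinatorial level, working with the M\"obius functions of the three intersection lattices $L(\mathcal A)$, $L(\mathcal A')$ and $L(\mathcal A'')$, rather than with any topological or cohomological interpretation. First I would fix the distinguished hyperplane $H$ and partition the flats of $\mathcal A$ into two classes: those $X \in L(\mathcal A)$ with $X \not\subseteq H$, and those with $X \subseteq H$. The first class is naturally identified with $L(\mathcal A')$, because a flat of $\mathcal A$ not contained in $H$ is an intersection of hyperplanes of $\mathcal A'$ and conversely every flat of $\mathcal A'$ that happens to be cut by $H$ still lies in the first class after we note $H$ does not contain it; one checks the induced orderings and the M\"obius values agree, since the M\"obius function of an interval depends only on the subposet. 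The second class, the flats contained in $H$, is exactly $L(\mathcal A'')$, with $H$ itself playing the role of the ambient space of the restricted arrangement.

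Next I would write
\[
\chi(\mathcal A, L) \;=\; \sum_{\substack{X \in L(\mathcal A)\\ X \not\subseteq H}} \mu_{\mathcal A}(X)\, L^{\dim X} \;+\; \sum_{\substack{X \in L(\mathcal A)\\ X \subseteq H}} \mu_{\mathcal A}(X)\, L^{\dim X}.
\]
The key claim is that for $X \not\subseteq H$ one has $\mu_{\mathcal A}(X) = \mu_{\mathcal A'}(X)$, so the first sum is literally $\chi(\mathcal A', L)$; and that for $X \subseteq H$ one has $\mu_{\mathcal A}(X) = -\mu_{\mathcal A''}(X)$, so the second sum is $-\chi(\mathcal A'', L)$. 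Adding these gives the formula. The first claim is the easy one: the down-set of such an $X$ in $L(\mathcal A)$ consists of flats $Y \geq \mathbb C^n$ with $Y \leq X$, and none of these can be contained in $H$ (a flat below $X$ contains $X$, hence is not inside $H$), so the interval $[\mathbb C^n, X]$ is the same poset in $L(\mathcal A)$ and in $L(\mathcal A')$, and the defining recursion for $\mu$ then forces equality by induction up the lattice.

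The main obstacle is the second claim, $\mu_{\mathcal A}(X) = -\mu_{\mathcal A''}(X)$ for $X \subseteq H$. Here the interval $[\mathbb C^n, X]$ in $L(\mathcal A)$ is strictly larger than the interval $[H, X]$ in $L(\mathcal A'')$: below $X$ in $L(\mathcal A)$ we find both $\mathbb C^n$ and a collection of flats not contained in $H$ together with all flats of $L(\mathcal A'')$ below $X$. I would handle this by applying the defining relation $\sum_{Y \leq X} \mu_{\mathcal A}(Y) = 0$ and splitting the sum over $Y \leq X$ according to whether $Y \subseteq H$ or not. For the $Y \not\subseteq H$ part one substitutes $\mu_{\mathcal A}(Y) = \mu_{\mathcal A'}(Y)$ from the first claim and recognizes the resulting partial sum via the identity $\sum_{Y \in L(\mathcal A'),\, Y \leq X} \mu_{\mathcal A'}(Y) = 0$ applied in $L(\mathcal A')$ — being careful that $X$ need not itself be a flat of $\mathcal A'$, so one actually sums over $Y \leq X'$ where $X'$ is the smallest flat of $\mathcal A'$ containing... — this is the delicate bookkeeping point and is where I expect to spend the most care. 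Once that telescoping is done, the remaining terms are exactly $\sum_{Y \subseteq H,\, Y \leq X} \mu_{\mathcal A}(Y)$ set equal to a known quantity, and comparing with the recursion defining $\mu_{\mathcal A''}$ and inducting on the rank of $X$ in $L(\mathcal A'')$ yields $\mu_{\mathcal A}(X) = -\mu_{\mathcal A''}(X)$. Summing over $X$ completes the proof; alternatively, if the combinatorial induction proves too fiddly, one can instead derive the formula from additivity of $F$ (or of $\chi$) over the stratification $\mathcal A = \mathcal A' \sqcup (H \setminus \mathcal A'')$ once the identification of $F(\mathcal A)$ with $\chi(\mathcal A, L)$ is available, but I would present the self-contained combinatorial argument as the primary route.
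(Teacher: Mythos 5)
The paper itself does not prove Deletion--Restriction; it states it as a classical fact (it is Theorem 2.56 in Orlik--Terao), so there is no paper proof to compare against. What I can assess is whether your combinatorial argument works, and it has a genuine gap. Your first claim --- that $\mu_{\mathcal A}(X)=\mu_{\mathcal A'}(X)$ for $X\not\subseteq H$ --- is correct and the interval argument you give for it is fine. But the inference ``so the first sum is literally $\chi(\mathcal A',L)$'' is false: $L(\mathcal A')$ is not the set of flats of $\mathcal A$ not contained in $H$, because $\mathcal A'$ may very well have flats that happen to lie inside $H$ (an intersection of hyperplanes of $\mathcal A'$ can be contained in $H$ without $H$ being one of the hyperplanes). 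Your identification of the first class with $L(\mathcal A')$ is therefore wrong from the start, and the decomposition does not split $\chi(\mathcal A,L)$ into $\chi(\mathcal A',L)$ plus a remainder.

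The second key claim, $\mu_{\mathcal A}(X)=-\mu_{\mathcal A''}(X)$ for $X\subseteq H$, is also false in general, and this is what kills a flat-by-flat approach. Take $\mathcal A=\{H,H_1,H_2\}$ to be three distinct concurrent lines through a point $p$ in $\bc^2$. Then $\mu_{\mathcal A}(p)=-(1-1-1-1)=2$, while $\mathcal A''$ is the single point $p$ inside $H\cong\bc^1$, so $\mu_{\mathcal A''}(p)=-1$ and $-\mu_{\mathcal A''}(p)=1\neq 2$. In this example $\chi(\mathcal A,L)=L^2-3L+2$, $\chi(\mathcal A',L)=L^2-2L+1$, $\chi(\mathcal A'',L)=L-1$, so the theorem holds, but the partial sums you propose are $L^2-2L$ and $-L+2$, neither of which matches $\chi(\mathcal A',L)$ or $-\chi(\mathcal A'',L)$. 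The cancellation that makes the theorem true happens across flats, not term by term, so the ``delicate bookkeeping'' you flag is not merely fiddly; the identities you are trying to telescope do not hold. A correct combinatorial proof typically proceeds via Whitney's theorem, writing $\chi(\mathcal A,t)=\sum_{\mathcal B}(-1)^{|\mathcal B|}t^{\dim\cap\mathcal B}$ over central subarrangements $\mathcal B\subseteq\mathcal A$ and splitting according to whether $H\in\mathcal B$, or via the lemma that $\mu_{\mathcal A''}(X)=-\sum_{Y}\mu_{\mathcal A'}(Y)$ where the sum runs over those $Y\in L(\mathcal A')$ with $Y\cap H=X$ and $Y\not\subseteq H$; either way one needs a summed identity, not a bijection of flats.
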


Now we will see how this characteristic polynomial relates to the polynomial $F(V)$.

\begin{thm}
Let $\mathcal A$ be a hyperplane arrangement in $\bc^n$. The following holds:
\[
 F(\mathcal{A})=L^n-\chi(\mathcal{A},L).
\]
\end{thm}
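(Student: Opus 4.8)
The plan is an induction on the number $k$ of hyperplanes of $\mathcal A$ (uniformly in the ambient dimension), matching the Deletion--Restriction recursion for $\chi(\mathcal A,L)$ with the additivity recursion for $F$ provided by step (2) of Algorithm~\ref{algoritmopoly}. Throughout, $F(\mathcal A)$ means $F$ of the union of the hyperplanes of $\mathcal A$, and I use that this union is cut out by a radical ideal whose associated primes are exactly the linear ideals of the individual hyperplanes. Two values are recorded at the outset: $F(\emptyset)=0$ (the natural convention, the empty set having no irreducible components), and $F(\bc^{e})=L^{e}$ for a linear subspace $\bc^e\subseteq\bc^n$; the latter is immediate, since in generic coordinates $\pi$ restricted to such a subspace is a $1:1$ linear map, so $g=1$ and the branching locus is empty, giving $F(\bc^e)=1\cdot L^{e}-F(\emptyset)+F(\emptyset)=L^{e}$.

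For the base case take $\mathcal A=\emptyset$: its intersection lattice is $\{\bc^n\}$ with $\mu(\bc^n)=1$, so $\chi(\mathcal A,L)=L^n$ and $F(\mathcal A)=0=L^n-L^n$. For the inductive step, let $\mathcal A$ have $k\ge 1$ hyperplanes, pick $H\in\mathcal A$, and form $\mathcal A':=\mathcal A\setminus\{H\}$ (an arrangement in $\bc^n$ with $k-1$ hyperplanes) and $\mathcal A''$, the arrangement induced in $H\cong\bc^{n-1}$ by intersecting with $\mathcal A'$ (at most $k-1$ hyperplanes). Writing $U$, $U'$ for the unions of $\mathcal A$, $\mathcal A'$, we have $U=U'\cup H$, the component $H$ is one of the associated primes of $U$, $U'$ is the union of the remaining ones, and $U'\cap H$ is precisely the union of $\mathcal A''$. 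Hence the additivity recursion for $F$, applied with the ideal of $H$ in the role of $I_1$, reads
\[
F(\mathcal A)=F(H)+F(\mathcal A')-F(\mathcal A'')=L^{n-1}+F(\mathcal A')-F(\mathcal A'').
\]
By the inductive hypothesis $F(\mathcal A')=L^n-\chi(\mathcal A',L)$ and $F(\mathcal A'')=L^{n-1}-\chi(\mathcal A'',L)$ (the latter in the ambient space $\bc^{n-1}$), so
\[
F(\mathcal A)=L^{n-1}+\bigl(L^n-\chi(\mathcal A',L)\bigr)-\bigl(L^{n-1}-\chi(\mathcal A'',L)\bigr)=L^n-\bigl(\chi(\mathcal A',L)-\chi(\mathcal A'',L)\bigr),
\]
and by Deletion--Restriction the right-hand side equals $L^n-\chi(\mathcal A,L)$, which closes the induction.

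The point requiring care is that this recursion must hold for the honest invariant $F$ rather than for $F_\pi$ with a fixed $\pi$: one needs a single $\sigma\in\GL(n,\bc)$ that simultaneously places $\mathcal A$, $\mathcal A'$, $U'\cap H$, and every variety occurring in their recursive evaluation in generic position, which is possible because each such condition is Zariski-dense-open and there are only finitely many of them. Relatedly, one must know that $F(U'\cap H)$ computed inside $\bc^n$ agrees with $F(\mathcal A'')$ computed inside $H\cong\bc^{n-1}$, i.e.\ that $F$ is unchanged under a linear embedding $\bc^{n-1}\hookrightarrow\bc^n$; this follows from the construction, since the covers, their branch loci, and their preimages used in the recursion are intrinsic to the variety (alternatively, from the product formula applied to $(U'\cap H)\times\{\mathrm{pt}\}$). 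Once this genericity bookkeeping is in place, the computation above is purely formal, so I expect the bookkeeping, rather than the algebra, to be the main obstacle.
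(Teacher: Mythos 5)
Your proof is correct and follows the same approach as the paper: induction on the number of hyperplanes, using the additivity relation $F(\mathcal A)=F(\mathcal A')+F(H)-F(\mathcal A'')$ together with Deletion--Restriction to close the induction. The only differences are cosmetic (you start the induction from the empty arrangement rather than a single hyperplane) or are extra care the paper elides (the genericity bookkeeping and the identification of $F(U'\cap H)$ computed in $\bc^n$ versus in $H\cong\bc^{n-1}$), which you correctly flag as the genuinely delicate point.
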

\begin{proof}
 By induction on the number of hyperplanes. he case of only one hyperplane is immediate.

If there are more than one hyperplane, take one hyperplane $H$ of $\mathcal{A}$. Since $\mathcal{A}=\mathcal{A}'\cup H$, we have, by aditivity, that
\[
 F(\mathcal{A})=F(\mathcal{A}')+F(H)-F(\mathcal{A}'\cap H)=F(\mathcal{A}')+L^{n-1}-F(\mathcal{A}'').
\]
Both $\mathcal{A}'$ and $\mathcal{A}''$ are hyperplane arrangements with less hyperplanes than $\mathcal{A}$, so, by induction hypothesis the following formulas hold:
\[
 \begin{array}{c}
F(\mathcal{A}')=L^n-\chi(\mathcal{A}',L) \\
F(\mathcal{A}'')=L^{n-1}-\chi(\mathcal{A}'',L).
 \end{array}
\]
Substituting these formulas in the previous one, and using the delition-restriction theorem we get the result.
\end{proof}

This esult tells us that the characteristic polynomial of $\mathcal A$ can be though of as the polynomial $F$ of its complement.

\section{Code and timings}
\label{sec-codigo}
Here we show an implementation in Sage (\cite{sage}) of the two algorithms.
 
\subsection{Implementation of Algorithm~\ref{algoritmoEuler}}

\begin{verbatim}
def Euler_characteristic(I):
    R=I.ring()
    if I.is_one():
        return 0
    if R.ngens()==1:
        return sum([j[0].degree() for j in I.gen().factor()])
    J1=I.radical()
    if J1.is_homogeneous():
        return 1
    primdec=J1.associated_primes()
    J1=primdec[0]
    m=len(primdec)
    if m>1:
        J2=R.ideal(1)
        for j in [1..m-1]:
            J2=J2.intersection(primdec[j])
        return Euler_characteristic(J1)+Euler_characteristic(J2)-Euler_characteristic(J1+J2)
    P=J1.homogenize().hilbert_polynomial()
    if P.is_zero():
        deg=0
    else:
        deg=P.leading_coefficient()*P.degree().factorial()
    if deg==1:
        return 1
    dim=J1.dimension()
    n=R.ngens()
    vars1=R.gens()[0:n-dim]
    vars2=R.gens()[n-dim:n]
    varpiv=vars1[-1]
    IH=J1.homogenize()
    S=IH.ring()
    JH=IH+S.ideal(S.gens()[n-dim:])
    if JH.dimension()>0:
        det=0
        while det==0:
            MH=random_matrix(R.base_ring(),n)
            det=MH.determinant()
        L=list(MH*vector(list(R.gens())))
        return Euler_characteristic(R.hom(L)(J1))
    if dim==0:
        return deg
    M=matrix([[f.derivative(v) for v in vars1] for f in J1.gens()])
    J=R.ideal(M.minors(n-dim))
    K=(J+J1).elimination_ideal(vars1)
    S=PolynomialRing(R.base_ring(),vars2)
    H=R.hom([S(0) for j in vars1]+[S(j) for j in vars2])
    C=deg-deg*Euler_characteristic(H(K))+Euler_characteristic(K+J1)
    return C
\end{verbatim}
This algorithm may be very slow (since it involves several Gröbner basis computations), but in several interesting cases, it gives a useful answer in reasonable time. Let's show here some examples.

In the case of curves and surfaces, the result is often reasonably fast; but it may vary a lot if a random change of variables has to be aplied. Here we show a few examples. These tests have been run on a Dual-Core AMD Opteron 8220.

Three examples of plane curves:
\begin{verbatim}
sage: R.<x,y>=QQ[]
sage: time Euler_characteristic(R.ideal(x^5+1))
5
Time: CPU 0.16 s, Wall: 0.17 s
sage: time Euler_characteristic(R.ideal(y^4+x^3-1))
-5
Time: CPU 0.19 s, Wall: 0.20 s
time Euler_characteristic(R.ideal(x^2+y^2-5*x^2*y^4+x*y-1))
-8
Time: CPU 17.82 s, Wall: 17.82 s
\end{verbatim}
A curve and a surface in $\bc^3$:
\begin{verbatim}
S.<x,y,z>=QQ[]
timeit('Euler_characteristic(S.ideal(x^5+y^2+2*x*y+1,3*x-5*y*x+y^2+1))')
10
Time: CPU 0.17 s, Wall: 0.18 s
timeit('Euler_characteristic(S.ideal(x^5+y^2+2*x*y+1))')
-3
Time: CPU 0.49 s, Wall: 0.49 s
\end{verbatim}

\subsection{Implementation of Algorithm~\ref{algoritmopoly}}

\begin{verbatim}
@parallel(7)
@cached_function
def FV(I,var='L'):
    FS=PolynomialRing(ZZ,var)
    L=FS.gen()
    R=I.ring()
    if R.ngens()==0:
        return 0
    if I.is_zero():
        return L^R.ngens()
    if I.is_one():
        return 0
    if R.ngens()==1:
        return FS(sum([j[0].degree() for j in I.gen().factor()]))
    J1=I.radical()
    if J1.is_homogeneous():
        S1=PolynomialRing(R.base_ring(),R.gens()[0:-1])
        H1=R.hom(list(S1.gens())+[S1(1)])
        H2=R.hom(list(S1.gens())+[S1(0)])
        resulp=FV([H1(J1),H2(J1)])
        d=dict([[a[0][0][0],a[1]] for a in resulp])
        [i1,i2]=[d[H1(J1)],d[H2(J1)]]
        return (L-1)*(i1)+i2
    primdec=J1.associated_primes()
    J1=primdec[0]
    m=len(primdec)
    if m>1:
        J2=R.ideal(1)
        for j in [1..m-1]:
            J2=J2.intersection(primdec[j])
            resulp=FV([J1,J2,J1+J2])
            d=dict([[a[0][0][0],a[1]] for a in resulp])
            [i1,i2,i3]=[d[J1],d[J2],d[J1+J2]]
        return i1+i2-i3
    P=J1.homogenize().hilbert_polynomial()
    if P.is_zero():
        deg=0
    else:
        deg=P.leading_coefficient()*P.degree().factorial()
    dim=J1.dimension()
    if deg==1:
        return FS(L^dim)
    n=R.ngens()
    vars1=R.gens()[0:n-dim]
    vars2=R.gens()[n-dim:n]
    varpiv=vars1[-1]
    IH=J1.homogenize()
    S=IH.ring()
    JH=IH+S.ideal(S.gens()[n-dim:])
    if JH.dimension()>0:
        det=0
        while det==0:
            MH=random_matrix(R.base_ring(),n)
            det=MH.determinant()
        L=list(MH*vector(list(R.gens())))
        return FV(R.hom(L)(J1))
    if dim==0:
        return FS(deg)
    M=matrix([[f.derivative(v) for v in vars1] for f in J1.gens()])
    J=R.ideal(M.minors(n-dim))
    K=(J+J1).elimination_ideal(vars1)
    S=PolynomialRing(R.base_ring(),vars2)
    H=R.hom([S(0) for j in vars1]+[S(j) for j in vars2])
    d=dict([[a[0][0][0],a[1]] for a in FV([H(K),K+J1])])
    [i1,i2]=[d[H(K)],d[K+J1]]
    C=deg*FS(L^dim-i1)+i2
    return C
\end{verbatim}
This implementation makes use of the Sage framework for parallel computations, allowing to use several processor cores at the same time to compute the intermediate steps. It also caches the already computed results in case they would be needed later. 

Note that, if we assume Conjecture~\ref{conjetura} to be true, this implementation works fine giving as entry the ideal whose algebraic set we want to compute (assuming it is in general position, which is something that can be easily checked). If we want to be safe from the posibility of the conjecture to be false, we have to introduce it over a ring that contains the parameters of the possible linear transformations. But, unluckily, the methods to compute the primary decomposition do not work over rings with parameters. One way to proceed is to compute the primary decomposition over the original ring without parameters. Then compute the discriminant with parameters, and then choose a value for the parameters in the open part of the Gröbner cover (see~\cite{montes-groebnercover} for a definition and algorithm).

Again, this method can be very slow, but in some cases it is fast enough to be useful. Here we present some of those examples:

The complex $2$-sphere:
\begin{verbatim}
sage: R.<x,y,z>=QQ[]
sage: time FV(R.ideal(x^2+y^2+z^2-1))
2*L^2 - 2*L + 2
Time: CPU 0.07 s, Wall: 0.32 s
\end{verbatim}

Another surface:
\begin{verbatim}
sage: time FV(R.ideal(x^3+y^3+z^3-1))
3*L^2 - 6*L + 12
Time: CPU 0.07 s, Wall: 0.29 s
\end{verbatim}

The intersection of the two:
\begin{verbatim}
sage: time FV(R.ideal(x^3+y^3+z^3-1,x^2+y^2+z^2-1))
6*L - 15
Time: CPU 0.08 s, Wall: 43.01 s
\end{verbatim}
and their union (the timing is done after cleaning the cache of the function):
\begin{verbatim}
sage: time FV(R.ideal((x^3+y^3+z^3-1)*(x^2+y^2+z^2-1)))
5*L^2 - 14*L + 29
Time: CPU 0.08 s, Wall: 43.02 s
\end{verbatim}

notice the aditivity of the polynomial.

The Whitney umbrella:
\begin{verbatim}
sage: time FV(R.ideal(x*y^2-z^2))
3*L^2 - 4*L + 2
Time: CPU 0.13 s, Wall: 3.44 s
\end{verbatim}

The $3$-sphere:
\begin{verbatim}
sage: S.<x,y,z,t>=QQ[]
sage: time FV(S.ideal(x^2+y^2+z^2+t^2-1))
2*L^3 - 2*L^2 + 2*L - 2
Time: CPU 0.09 s, Wall: 0.43 s
\end{verbatim}

\section{Counting points over finite fields}
\label{sec-cuerposfinitos}
In this section we will see how the polynomial $P(V)$ can be related to the number of points of the variety considered over a finite field. Let's ilustrate this fact with an example.
\begin{ejm}
Consider the conic given by the equation
\[
 x_1^2+x_2^2-1
\]
in the affine plane over the field of $5$ elements $\bfff_5$.

The set of rational points is the following:
\[
 (0,1),(0,4),(1,0),(4,0)
\]

If we project it to the $x_1$ axis, we see that over the point $(0)$, we have two preimages, as expected by the degree. Over the points $(1)$ and $(4)$, we have just one point of the curve, since the cover ramifies there. But over the points $(2)$ and $(3)$ we have no points of the curve. The reason for this is that the equations $2^2+x_2^2-1$ and $3^2+x_2^2-1$ have no roots over $\bfff_5$. However, they do have all their solutions over a quadratic extension $\bfff_{25}$ of degree $2$. In particular, if we look at the points of $\bfff_5\times\bfff_{25}$ that satisfy the equation, we obtain the set
\[
 \{(0, 1), (0, 4), (1, 0), (2, a + 2), (2, 4a + 3), (3, a + 2), (3, 4a + 3), (4, 0)\},
\]
where $a$ is an element of $\bfff_{25}$ that has minimal polynomial $x^2+3$ over $\bfff_5$.

It might seem strange to consider the set of points in $\bfff_{5}\times \bfff_{25}$ that satisfy a given equation. But this set is in fact an algebraic set. Indeed, it can be expressed as the set of points of the affine plane over $\bfff_{25}$ that satisfy the equations
\[\begin{array}{c}
 x_1^2+x_2^2-1\\
x_1^5-x_1
\end{array}
\]

Recall that the polynomial $P(V)$ for such a conic was $f=2L-2$. In this case, we have obtained $8$ points in total, which is preciselly the value of $f$ for $L=5$. A quick look at the points shows why this happens: there are two points over each value of $\bfff_5$, with the exception of the two branching points, where there is only one.
\end{ejm} 

Note that both algorithms~\ref{algoritmoEuler} and~\ref{algoritmopoly} can be run over finite fields in the same way as they run over the rationals. Whith a small exception, though: to ensure the existence of a change of coordinates that puts the ideal in general position, we might need to work in a finite field extension. Once done that, both algorithms would mimic the steps given by the algorithm run over $\bq$, except if some leading coefficient becomes zero. But that would happen only for a finite number of prime numbers $p$.
\begin{ntc}
We can then define the polynomial $F_p(V)$ as the result of runing algorithm \ref{algoritmopoly} in a field of characteristic $p$. 
\end{ntc}
As we have seen before, $F_p(V)=F(V)$ for almost every prime number $p$.
\begin{ntc}
Given a polynomial \[f=a_0+a_1L+a_2L^2+\cdots+a_nL^n\in\bz[L]\] and a list of numbers $(d_1,\ldots,d_s)$ with $s\geq n$, we will denote by $f(d_1,\ldots,d_s)$ the number 
\[
f(d_1,\ldots,d_s)=a_0+a_1d_1+a_2d_1d_2+\cdots+a_nd_1d_2\cdots d_n
\]
\end{ntc}

Another difference between the way the algorithms would run over $\bq$ and over finite fields lies in the primary decomposition. But then again, this will only happen for a finite number of primes.

\begin{thm}
Given an ideal $I\trianglelefteq \bfff_p[x_1,\ldots,x_n]$, and the corresponding $f:=F_p(V)$ of degree $s$, there exists a list of numbers $(d_1\leq\cdots\leq d_n)$ such that the number of points in
\[\left(\bfff_{p^{d_1}}\times\cdots\times\bfff_{p^{d_n}}\right)\cap V(I)
\]
equals the number $f(p^{d_1},\ldots,p^{d_n})$.

Moreover, for any such list $(d_1\leq \cdots \leq d_n)$ and any $d_i'$ multiple of $d_i$ there exists another list $(d_1,\leq\cdots\leq d_{i-1}\leq d_i'\leq d_{i+1}'\leq \cdots \leq d_n')$ satisfying the same property.
\end{thm}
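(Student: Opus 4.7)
The plan is to induct on $\dim V(I)$ and, at fixed dimension, on the number of associated primes, mirroring exactly the recursive structure of Algorithm~\ref{algoritmopoly}. At every step the goal is twofold: produce a list $(d_1\leq\cdots\leq d_n)$ realising the counting formula, and check that this list is compatible with the lists furnished by the inductive calls. For the base case, when $V(I)$ is zero-dimensional, it consists of $g$ closed points whose coordinates all lie in some $\bfff_{p^N}$ for $N$ a common multiple of the degrees over $\bfff_p$ of those coordinates; taking $d_1=\cdots=d_n=N$ works, since $f=g$ is a constant polynomial. For the additive step, if there is more than one associated prime, one applies the inductive hypothesis to the three varieties appearing in
\[F_p(V(I)) = F_p(V(I_1)) + F_p(V(I_2\cap\cdots\cap I_m)) - F_p(V(I_1 + (I_2\cap\cdots\cap I_m))),\]
obtains three lists, takes componentwise least common multiples, and applies the moreover clause to restore monotonicity. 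Inclusion-exclusion for finite sets then gives the identity for $V(I)$.

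The essential case is that of an irreducible $V$ of dimension $d>0$ in general position, where
\[F_p(V) = g\,L^{d} - g\,F_p(\Delta) + F_p(\pi^{-1}(\Delta))\]
and $\Delta\subset\bc^{d}$, $\pi^{-1}(\Delta)\cap V\subset\bc^{n}$ are of strictly smaller dimension. By induction there are lists for $\Delta$ and for $\pi^{-1}(\Delta)$; one forms $(d_1,\ldots,d_d)$ and $(d_1,\ldots,d_n)$ common to both (again using the moreover clause to combine). It then remains to enlarge $d_{d+1},\ldots,d_{n}$ so that, for every $x$ in the finite set $(\bfff_{p^{d_1}}\times\cdots\times\bfff_{p^{d_d}})\setminus\Delta(\bar{\bfff}_p)$, the $g$ geometric points of $\pi^{-1}(x)\cap V$ all lie inside $\bfff_{p^{d_{d+1}}}\times\cdots\times\bfff_{p^{d_n}}$. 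This is possible because there are finitely many such fibers and each involves finitely many coordinates in $\bar{\bfff}_p$, so we can take $d_j$ ($j>d$) to be a sufficiently large common multiple. With the list so chosen, the branched-cover structure forces the count
\[|V\cap \textstyle\prod_{j} \bfff_{p^{d_j}}| = g\bigl(p^{d_1+\cdots+d_d} - |\Delta\cap\textstyle\prod_{j\leq d}\bfff_{p^{d_j}}|\bigr) + |\pi^{-1}(\Delta)\cap\textstyle\prod_{j}\bfff_{p^{d_j}}|,\]
which, upon substituting the inductive equalities, is exactly $f(p^{d_1},\ldots,p^{d_n})$.

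For the moreover clause, one observes that the entire construction is stable under replacing any $d_i$ by a multiple $d_i'$: the same primary decomposition, branch locus $\Delta$, and finite fiber computations go through over the larger field $\bfff_{p^{d_i'}}$, and the enlarged fibers are still contained in the ambient products provided that the later $d_{i+1},\ldots,d_n$ are also enlarged to sufficiently large multiples. Enlarging them further, if necessary, restores $d_i'\leq d_{i+1}'\leq\cdots\leq d_n'$.

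The main obstacle lies in the inductive step for irreducible positive-dimensional varieties, specifically in asserting that the fibers over non-branching $x$ really do split into exactly $g$ distinct geometric points over $\bar{\bfff}_p$, and that their coordinates can be captured by enlarging only the last $n-d$ exponents. The first point is guaranteed because $\Delta$ contains the branch locus of $\pi|_V$, which was proved in Section~\ref{sec-justificacion}; the second is effective because the relevant $x$ range over a finite set and the fibers are zero-dimensional. A subsidiary subtlety to watch for is that the primary decomposition used in the algorithm may behave differently over $\bfff_p$ than over $\bq$, but as the theorem is already stated in characteristic $p$ this is simply folded into the hypotheses.
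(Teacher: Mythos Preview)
Your proposal is correct and follows essentially the same approach as the paper: induction on dimension, with the irreducible positive-dimensional case handled by combining the inductively obtained lists for $\Delta$ and $\pi^{-1}(\Delta)$ via the moreover clause, and then enlarging the last $n-d$ exponents so that every unramified fiber over the finite grid contributes exactly $g$ points. Your treatment is in fact slightly more complete than the paper's, since you explicitly spell out the inclusion--exclusion step for the reducible case (multiple associated primes), which the paper's proof leaves implicit in the algorithm.
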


\begin{proof}
 By induction over the dimension. If $I$ is zero dimensional, $V(I)$ consists on $F_p(V)=deg(I)$ distinct points, whose coordinates lie in a suficiently big extension $\bfff_{p^{d_1}}$. Any sequence starting with $d_1$ would satisfy the theorem.

If $I$ has dimension $d>0$ and degree $g$, we have that $F_p(V(I))=g(L^d-F_p(V(K)))+F_p(V(I+K))$. By induction, we can assume that both $K$ and $I+K$ satisfy the result. Let $(d_1^1\leq\cdots\leq d_d^1)$ and $(d_1^2\leq\cdots\leq d_n^2)$ the corresponding sequences. Take $d_1=gcd(d_1^1,d_1^2)$. There exist two sequences $(d_1\leq {d_2^1}'\leq\cdots\leq {d_d^1}')$ and $(d_1\leq {d_2^2}'\leq\cdots\leq {d_n^2}')$ that are valid for $K$ and $I+K$ respctivelly, and coinciding in the first term. Repeating this reasoning we can obtain two sequences $(d_1\leq\cdots \leq d_d)$ and $(d_1\leq\cdots\leq d_d\leq d_{d+1}\leq \cdots \leq d_n)$ that are valid for $K$ and $I+K$ respectivelly.

Note that, since both $F_p(V(K))$ and $F_p(V(I+K))$ are of dimension at most $d-1$, the terms $d_d,\ldots, d_n$ can be changed arbitrarily and still the squence would be valid for $K$ and $I+K$.

Now take any point $q:=(q_1,\ldots,q_d)\in(\bfff_{p^{d_1}}\times\cdots\times \bfff_{p^{d_{d}}})\setminus V(K)$. Taking an apropiate field $\bfff_{p^{n_q}}$, we can ensure that there are exactly $g$ points in $\{(x_1,\ldots,x_n)\in (\bfff_{p^{n_q}})^n\mid x_1=q_1,\ldots,x_d=q_d\}\cap V$. We can do the same for every point $q$, and take a common field extension $\bfff_{p^s}$ of all the different $\bfff_{p^{n_q}}$. This way, we have that there are exactly $g$ points of $V\cap (\bfff_{p^{d_1}}\times\cdots\times \bfff_{p^{d_{d}}}\times \bfff_{p^{s}}\times\cdots \times \bfff_{p^{s}})$ over each point of $(\bfff_{p^{d_1}}\times\cdots\times \bfff_{p^{d_{d}}})\setminus V(K)$.

By definition, we have that 
\[
 F_p(V)=g(L^d-F_p(V(K)))+F_p(V(I+K)).
\]

Now if we restrict ourselves to the points in $\bfff_{p^{d_1}}\times\cdots\times \bfff_{p^{d_{d}}}\times \bfff_{p^{s}}\times\cdots \times \bfff_{p^{s}}$, we have that
\[
 \#V=g\cdot(p^{d_1}p^{d_2}\cdots p^{d_d}-\#V(K))+\#V(I+K).
\]
Making use of the induction hypothesis, and the fact that $F_p(V(K))$ and $F_p(V(I+K))$ are of dimension less than $d$, the result follows easily.

\end{proof}

\begin{cor}
Let $V$ be an algebraic set in $\bc^n$ defined by a an ideal $I\trianglelefteq\bk[x_1,\ldots,x_n]$, where $\bk$ is an algebraic extension of $\bq$. Then for almost every prime $p$ there exists a list of positive integers $(d_{1,p},\ldots,d_{n,p})$ such that the number of points in
\[\left(\bfff_{p^{d_1}}\times\cdots\times\bfff_{p^{d_n}}\right)
\] that satisfy the equations of $I$
equals the number $F(V(I))(p^{d_1},\ldots,p^{d_n})$.
\end{cor}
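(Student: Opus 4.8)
The plan is to deduce the statement from the preceding theorem by a standard good‑reduction (``spreading out'') argument.

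First I would reduce to the case where $\bk$ is a number field. The ideal $I$ is generated by finitely many polynomials $f_1,\dots,f_s$, whose finitely many coefficients already lie in a finite extension $\bk_0\subseteq\bk$ of $\bq$; since both $F(V(I))$ and the notion of a point satisfying the equations of $I$ depend only on $I$, not on the ambient $\bk$, I may replace $\bk$ by $\bk_0$. Let $\mathcal O$ be the ring of integers of $\bk$; after multiplying the $f_j$ by suitable nonzero integers I may assume $f_j\in\mathcal O[x_1,\dots,x_n]$, so $I$ has the integral model $\mathcal I:=\mathcal O[x_1,\dots,x_n]\cdot(f_1,\dots,f_s)$. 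For a prime $p$ I pick a prime $\mathfrak p\subseteq\mathcal O$ over $p$, with residue field $\bfff_q$, $q=p^{f(\mathfrak p\mid p)}$, and let $\bar I\trianglelefteq\bfff_q[x_1,\dots,x_n]$ be the reduction of $\mathcal I$ modulo $\mathfrak p$, and $\bar V:=V(\bar I)$.

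The first ingredient is that $F_p(\bar V)=F(V(I))$ for all but finitely many $p$: this is exactly the assertion recorded above that $F_p(V)=F(V)$ for almost every $p$. The mechanism is that each step of Algorithm~\ref{algoritmopoly} commutes with reduction modulo $\mathfrak p$ once $p$ avoids a finite set. A Gr\"obner basis of $\mathcal I$ (over $\mathcal O$, denominators cleared) reduces to a Gr\"obner basis of $\bar I$ as soon as $p$ divides none of the relevant leading coefficients, so the Hilbert polynomial — hence the dimension $d$ and degree $g$ — is preserved; the associated primes of $I$ reduce to those of $\bar I$ for $p$ outside a further finite set (generic flatness); the general‑position condition is a finite list of non‑vanishing conditions and so is stable under reduction away from finitely many $p$; and the ideals $J$, $K$, $H=I+K$, being formed from derivatives, minors and elimination of a Gr\"obner basis, are again obtained by reduction. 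Inducting on the dimension, the recursive calls match as well, so the polynomial produced over $\bk$ equals the one produced over $\bfff_q$ for all $p$ outside a finite set $\Sigma$. The second ingredient is the preceding theorem, whose proof uses nothing about the base field being a prime field: replacing $p$ by $q$ throughout, it applies verbatim to $\bar I$ and yields positive integers $e_1\le\cdots\le e_n$ with
\[
\#\bigl(\bigl(\bfff_{q^{e_1}}\times\cdots\times\bfff_{q^{e_n}}\bigr)\cap\bar V\bigr)=F_p(\bar V)(q^{e_1},\dots,q^{e_n}).
\]
For $p\notin\Sigma$ set $d_{i,p}:=f(\mathfrak p\mid p)\cdot e_i$; these are positive integers, $\bfff_{p^{d_{i,p}}}=\bfff_{q^{e_i}}$ and $p^{d_{i,p}}=q^{e_i}$. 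A tuple of $\bfff_{p^{d_{1,p}}}\times\cdots\times\bfff_{p^{d_{n,p}}}$ satisfies the equations of $I$ exactly when it satisfies their reduction $\bar I$ (each factor contains $\bfff_q$, so the reduced equations make sense there), whence
\[
\#\bigl\{\,a\in\bfff_{p^{d_{1,p}}}\times\cdots\times\bfff_{p^{d_{n,p}}}:a\text{ satisfies the equations of }I\,\bigr\}=F_p(\bar V)(p^{d_{1,p}},\dots,p^{d_{n,p}}),
\]
and by the first ingredient the right‑hand side equals $F(V(I))(p^{d_{1,p}},\dots,p^{d_{n,p}})$, which is the claimed identity.

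I expect the real work to lie entirely in the first ingredient, namely in pinning down the finite set $\Sigma$ of bad primes outside which Algorithm~\ref{algoritmopoly} commutes with reduction — above all the compatibility of the primary decomposition with reduction modulo $\mathfrak p$, the one step that is not a direct manipulation of Gr\"obner bases (and which is precisely the content behind the already‑quoted remark that the algorithms run identically over $\bq$ and over $\bfff_p$ for all but finitely many $p$). By contrast, the passage through a possibly nontrivial residue degree $f(\mathfrak p\mid p)$ — which is forced on us because ``almost every prime'' includes primes that do not split completely in $\bk$ — is only bookkeeping, since the preceding theorem is insensitive to the size of the finite base field.
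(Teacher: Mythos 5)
Your proof is correct and fills in precisely the details that the paper leaves implicit: the paper states no proof for this corollary and simply points to the preceding theorem together with the remark that ``$F_p(V)=F(V)$ for almost every prime $p$.'' Your treatment of the residue-degree bookkeeping (choosing a prime $\mathfrak p$ over $p$, applying the theorem over $\bfff_q$ with $q=p^{f(\mathfrak p\mid p)}$, and setting $d_{i,p}=f(\mathfrak p\mid p)\,e_i$) is the right way to handle the case where $p$ does not split completely, a point the paper glosses over. Your analysis of why the algorithm commutes with reduction outside a finite set of primes — Gr\"obner bases, Hilbert polynomials, general position as an open condition, and the more delicate compatibility of primary decomposition — correctly identifies where the genuine content lies; the paper simply asserts this reduction-compatibility without argument, so you have in fact supplied more justification than the source.
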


This corollary could allow a different way to compute the polynomial $F(V(I))$ by counting points over finite fields. If we know the value of $F(V(I))(S)$ for a sufficient number of such sequences $S$, recovering the coefficients of the polynomial is a simple linear algebra problem.

\bibliographystyle{abbrv}
\bibliography{biblio}
\end{document}